\newcommand{\counte}{theorem}
\newtheorem{theorem}{\bf Theorem}[section]
\newtheorem{defn}[\counte]{\bf Definition}
\newtheorem{question}[\counte]{\bf Question}
\newtheorem{lemma}[\counte]{\bf Lemma}
\newtheorem{remark}[\counte]{\bf Remark}
\numberwithin{equation}{section}
\renewcommand{\thefootnote}{\fnsymbol{footnote}}
\begin{document}

\renewcommand{\thefootnote}{\arabic{footnote}}

\centerline{\bf\Large Pythagorean Theorem}
\centerline{\bf\Large \& curvature with lower or upper bound \footnote{Supported by NSFC 11971057 and BNSF Z190003. \hfill{$\,$}}}

\vskip5mm

\centerline{Xiaole Su, Hongwei Sun, Yusheng Wang\footnote{The
corresponding author (E-mail: wyusheng@bnu.edu.cn). \hfill{$\,$}}}

\vskip6mm

\noindent{\bf Abstract.} In this paper, we give a comparison version of Pythagorean Theorem
to judge the lower or upper bound of the curvature of Alexandrov spaces (including Riemannian manifolds).

\vskip1mm

\noindent{\bf Key words.} Pythagorean Theorem, Alexandrov space, Toponogov's Theorem

\vskip1mm

\noindent{\bf Mathematics Subject Classification (2000)}: 53-C20.

\vskip6mm

\setcounter{section}{-1}


\section{Introduction}

\vskip2mm

Let $\mathbb S^n_k$ be the complete and simply
connected $n$-dimensional space form with constant curvature $k$. For any
minimal geodesics $[pq],[pr]\subset \mathbb S^n_k$ which form an angle $\angle qpr$ at $p$,
the Law of Cosine says that
$$
\begin{cases}
\begin{aligned}
\cos(\sqrt{k}|qr|) & =\cos(\sqrt{k}|pq|)\cos(\sqrt{k}|pr|)+\sin(\sqrt{k}|pq|)\sin(\sqrt{k}|pr|)\cos\angle qpr, & k>0 \\
|qr|^2  & =|pq|^2+|pr|^2-2|pq||pr|\cos\angle qpr, & k=0 \\
\cosh(\sqrt{-k}|qr|) & =\cosh(\sqrt{-k}|pq|)\cosh(\sqrt{-k}|pr|)-\sinh(\sqrt{-k}|pq|)\sinh(\sqrt{-k}|pr|)\cos\angle qpr,  & k<0	
\end{aligned}\ ,\end{cases}
$$
where $|\cdot\cdot|$ denotes the distance between two given points. In particular, if $\angle qpr=\frac\pi2$, then
\begin{equation}\label{eqn0.1}
\begin{cases}
\begin{aligned}
\cos(\sqrt{k}|qr|) & =\cos(\sqrt{k}|pq|)\cos(\sqrt{k}|pr|), & k>0 \\
|qr|^2 & =|pq|^2+|pr|^2, & k=0 \\
\cosh(\sqrt{-k}|qr|) & =\cosh(\sqrt{-k}|pq|)\cosh(\sqrt{-k}|pr|),  & k<0	
\end{aligned}\ ,
\end{cases}
\end{equation}
which is the famous Pythagorean Theorem on $\mathbb S^n_k$, especially the middle one for $k=0$
(the Gougu Theorem in China).
A fascinating thing is that the Law of Cosine can be derived from Pythagorean Theorem, i.e. the Law of Cosine
is equivalent to Pythagorean Theorem.

For a general Riemannian manifold $M$, it is well-known that a necessary and sufficient condition of sectional curvature $\sec_M\geq k$ (or $\leq k$)
is a local comparison version of the Law of Cosine. Namely,

\begin{theorem} \label{thm0.1}
Let $M$ be a complete Riemannian manifold. Then
$\sec_M\geq k\ (\leq k)$  if and only if for any $x\in M$ there exists a neighborhood $U_x$ of $x$ such that for any minimal geodesics $[pq],[pr]\subset U_x$
$$
\begin{cases}
\begin{aligned}
\cos(\sqrt{k}|qr|) & \geqslant (\leqslant)\ \cos(\sqrt{k}|pq|)\cos(\sqrt{k}|pr|)+\sin(\sqrt{k}|pq|)\sin(\sqrt{k}|pr|)\cos\angle qpr, & k>0 \\
|qr|^2&  \leqslant (\geqslant)\ |pq|^2+|pr|^2-2|pq||pr|\cos\angle qpr, & k=0 \\
\cosh(\sqrt{-k}|qr|)& \leqslant (\geqslant)\ \cosh(\sqrt{-k}|pq|)\cosh(\sqrt{-k}|pr|)-\sinh(\sqrt{-k}|pq|)\sinh(\sqrt{-k}|pr|)\cos\angle qpr,  & k<0	
\end{aligned}\ ;\end{cases}
$$
and equality holds for all $x\in M$ and all $[pq],[pr]\subset U_x$ if and only if $\sec_M\equiv k$.
\end{theorem}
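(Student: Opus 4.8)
The plan is to recognize that Theorem 0.1 is essentially a restatement of Toponogov's comparison theorem (the local version of it), combined with the observation mentioned in the text that the Law of Cosine is equivalent to the Pythagorean Theorem. So the proof breaks into proving the ``if'' direction, the ``only if'' direction, and the rigidity (equality) statement. I would treat the three cases $k>0$, $k=0$, $k<0$ uniformly by working with the comparison angle $\tilde\angle_k qpr$, i.e. the angle at the vertex corresponding to $p$ in the comparison triangle $\tilde\triangle qpr$ built in $\mathbb S^2_k$ with the same side lengths $|pq|,|pr|,|qr|$. By the Law of Cosine on $\mathbb S^2_k$, the three displayed inequalities are precisely the single statement $\angle qpr\geq\tilde\angle_k qpr$ (for the $\sec_M\geq k$ case) or $\angle qpr\leq\tilde\angle_k qpr$ (for the $\sec_M\leq k$ case); this reduction I would carry out first, using monotonicity of $\cos$ and of the relevant functions of $|qr|$.

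\textbf{The ``only if'' direction.} Assuming $\sec_M\geq k$, I would invoke the local Toponogov comparison theorem: around each $x\in M$ one can choose a normal ball $U_x$ small enough that (a) every pair of points is joined by a unique minimal geodesic lying in $U_x$, and (b) for any geodesic hinge $[pq],[pr]$ in $U_x$ the comparison angle satisfies $\angle qpr\geq\tilde\angle_k qpr$, i.e. the hinge is ``thicker'' than in the model space. Unwinding this via the Law of Cosine on $\mathbb S^2_k$ gives exactly the claimed inequalities. For $\sec_M\leq k$ I would use the corresponding local comparison (the reverse Toponogov / Alexandrov inequality valid for upper curvature bounds), choosing $U_x$ inside a totally normal CAT$(k)$-type neighborhood so that comparison angles satisfy $\angle qpr\leq\tilde\angle_k qpr$. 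The choice of $U_x$ convex enough to guarantee the minimal geodesics stay inside is the routine technical point here.

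\textbf{The ``if'' direction.} This is the harder direction and where I expect the main obstacle to lie, since I must recover a pointwise curvature bound from a purely metric hinge inequality. The strategy is to fix $x$ and a $2$-plane $\sigma\subset T_xM$, take $p=x$, and choose $q=\exp_p(sv)$, $r=\exp_p(tw)$ for unit vectors $v,w\in\sigma$ spanning a fixed angle $\theta=\angle qpr$, with $s,t\to0$. I would expand both sides of the hypothesized inequality in a Taylor series in $s,t$: the distance $|qr|$ in $M$ has an expansion whose leading correction term to the Euclidean (flat) value is governed by the sectional curvature $\sec_M(\sigma)=K(v,w)$, via the standard second-variation / Jacobi-field computation, while the model-space side expands with $k$ in the same slot. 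Matching the first nonvanishing order at which curvature appears (the fourth-order term in $(s,t)$, the lowest order distinguishing $K$ from $k$) converts the hinge inequality into $K(v,w)\geq k$ (resp.\ $\leq k$) at $x$. Since $x$ and $\sigma$ were arbitrary, $\sec_M\geq k$ (resp.\ $\leq k$) follows. The delicate part is justifying that the inequality holds for \emph{all} small hinges forces the inequality at the infinitesimal level in every plane and every direction pair, not merely on average; this is handled by letting $\theta$ range and using that the curvature term enters with a definite, $\theta$-dependent sign in the expansion, so no cancellation can hide a violation.

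\textbf{Rigidity.} For the equality statement, if equality holds for all hinges in all $U_x$, then the Taylor expansion forces $K(v,w)=k$ identically, i.e.\ $\sec_M\equiv k$; conversely if $\sec_M\equiv k$ then $M$ is locally isometric to $\mathbb S^n_k$ and the exact Law of Cosine \eqref{eqn0.1}'s general form holds with equality on each $U_x$ taken to be a normal ball. I would phrase the forward rigidity as the boundary case of the ``if'' argument: equality at every order, in particular the curvature order, pins down $K\equiv k$.
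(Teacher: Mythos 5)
The paper contains no proof of Theorem 0.1 to compare against: it is stated as a classical fact, and its ``twin version'' (Theorem 1.1) is simply cited to Cheeger--Ebin [CE], so the authors treat the whole statement as known background. Judged on its own, your sketch is correct and is exactly the standard route. The reduction of the three displayed inequalities to the single comparison-angle statement $\angle qpr\geq(\leq)\ \tilde\angle_k qpr$ via the Law of Cosine on $\mathbb S^2_k$ is the same translation the paper itself uses in Theorem 1.3; the ``only if'' direction is local Toponogov (resp.\ local Rauch/CAT$(k)$ comparison in a totally normal ball); the ``if'' direction via the fourth-order expansion
$$
\bigl|\exp_p(sv)\exp_p(tw)\bigr|^2=s^2+t^2-2st\cos\theta-\tfrac13\langle R(v,w)w,v\rangle\,s^2t^2+O\bigl((s^2+t^2)^{5/2}\bigr)
$$
is the classical second-variation/Jacobi-field argument, consistent with the paper's Remark 0.5, which notes that in the Riemannian case such index-form tools suffice (while failing for general Alexandrov spaces, which is why the paper's actual work lies elsewhere). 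Two minor refinements: the step you flag as delicate is not actually delicate, since $\langle R(v,w)w,v\rangle=K(\sigma)\sin^2\theta$ for the plane $\sigma=\operatorname{span}(v,w)$, so each hinge isolates exactly one plane and one strict-sign coefficient for $\theta\in(0,\pi)$ --- no averaging or cancellation can occur; and for the forward rigidity you need not re-run the expansion at all, since equality in the hypothesis gives both one-sided inequalities simultaneously, so the two ``if'' directions already yield $k\leq\sec_M\leq k$, with the converse by Cartan's local isometry with $\mathbb S^n_k$ as you say.
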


Inspired by the relation between Pythagorean Theorem and the Law of Cosine, a natural question is:

\begin{question} \label{qes0.2}
{\rm Is there a comparison version of Pythagorean Theorem to judge the lower or upper bound of $\sec_M$?}
\end{question}

In this paper, for three points $p,q,r$ in a metric space, we denote by $\tilde\angle_{k} qpr$ the angle between $[\tilde p\tilde q]$ and $[\tilde p\tilde r]$ in $\mathbb S^2_{k}$ with $|\tilde p\tilde q|=|pq|$, $|\tilde p\tilde r|=|pr|$ and $|\tilde q\tilde r|=|qr|$. Note that a comparison version of (\ref{eqn0.1}),
$$
\begin{cases}
\begin{aligned}
\cos(\sqrt{k}|qr|) &\geq\ (\leq)\ \cos(\sqrt{k}|pq|)\cos(\sqrt{k}|pr|), & k>0 \\
|qr|^2 & \leq\ (\geq)\ |pq|^2+|pr|^2, & k=0 \\
\cosh(\sqrt{-k}|qr|) & \leq\ (\geq)\ \cosh(\sqrt{-k}|pq|)\cosh(\sqrt{-k}|pr|),  & k<0
\end{aligned}\ ,
\end{cases}
$$
is equivalent to
$$\tilde\angle_kqpr\leq\ (\geq)\ \frac\pi2.$$

The main goal of the paper is to give a positive answer to Question 0.2 not only for a Riemannian manifold but also for an Alexandrov space
through the following result.

\vskip2mm

\noindent {\bf Theorem A.} {\it Let $X$ be a complete Alexandrov space. Then $X$ is of curvature
$\geq\ (\leq)\ k$ if and only if for any $x\in X$ there exists a neighborhood $U_x$ of $x$
such that, for any $q\in U_x$ and $[r_1r_2]\subset U_x$, if there is $p\in [r_1r_2]^\circ$ (the interior part of $[r_1r_2]$)
satisfying $|qp|=|q[r_1r_2]|$ (where $|q[r_1r_2]|\triangleq\min\limits_{s\in [r_1r_2]}\{|qs|\}$) then
\begin{equation}\label{eqn0.2}
\tilde\angle_kqpr_i\leq\ (\geq)\ \frac\pi2,\ i=1,2.
\end{equation}
Moreover, if equality in {\rm (\ref{eqn0.2})} holds for all $x\in X$ and all $q\in U_x$, $[r_1r_2]\subset U_x$ and such $p\in[r_1r_2]$, then
$X^\circ$ (the interior part of $X$) is a Riemannian manifold with sectional curvature equal to $k$. }

\vskip2mm

In this paper, that $X$ is an Alexandrov space means that for any $x\in X$ there is a real number $k_x$ such that a neighborhood of $x$
is a so-called Alexandrov space with curvature $\geq k_x$ or $\leq k_x$; and we call $x$ a CBB-type or CBA-type point when
$X$ is of curvature $\geq k_x$ or $\leq k_x$ around $x$ respectively. Of course, a Riemannian manifold is an Alexandrov space. Note that although $X$ is complete,
$X$ might not be equal to $X^\circ$ (this differs from the Riemannian case). Refer to Section 1 for details on Alexandrov spaces.

Note that if $X$ is a complete CBB-type Alexandrov space, then `$|qp|=|q[r_1r_2]|$' in Theorem A implies that
$\angle qpr_i=\frac\pi2$ for any $[qp]$ and $[pr_i]$ (see Lemma \ref{lem1.6} below). Thereby, it is clear
that Theorem A has the following corollary, a positive answer to Question \ref{qes0.2}.

\vskip2mm

\noindent {\bf Corollary B.} {\it Let $X$ be a complete CBB-type Alexandrov space. Then $X$ is of curvature
$\geq (\leq)\  k$  if and only if for any $x\in X$ there exists a neighborhood $U_x$ of $x$
such that, for all $[pq],[pr]\subset U_x$ with $\angle qpr=\frac\pi2$,
\begin{equation}\label{eqn0.3}
\tilde\angle_kqpr\leq\ (\geq)\ \frac\pi2.
\end{equation}
Moreover, if equality in {\rm(0.3)} holds for all $x\in X$ and all such $[pq],[pr]\subset U_x$, then
$X^\circ$ is a Riemannian manifold with sectional curvature equal to $k$.}

\begin{remark}\label{rem0.3} {\rm
For the rigidity part of Corollary B, one can consider the following simple example. Note that a geodesic triangle on $\mathbb S_k^2$ separates $\mathbb S_k^2$ into two parts with boundary.
The smaller one is  a complete Alexandrov space with curvature $\geq k$,
but not a Riemannian manifold with boundary, and satisfies Pythagorean Theorem locally.
(However, the larger one is an Alexandrov space with curvature $\leq k$, and does not satisfy Pythagorean Theorem
around the vertices of the triangle.)}
\end{remark}

\begin{remark}\label{rem0.4} {\rm
For a CBA-type Alexandrov space $X$,
we cannot judge whether $X$ is of curvature $\geq k$ or $\leq k$ in a similar way as Corollary B. For example,
the union of three rays in $\mathbb R^2$ starting from a common point (with the induced intrinsic metric),
a CBA-type Alexandrov space, has no $\frac\pi2$-angle nor lower curvature bound.}
\end{remark}

\begin{remark}\label{rem0.5} {\rm
If $X$ is a Riemannian manifold, one can give a proof for Theorem A via the second variation formula and the comparison results
on index forms (the main tools in proving the well-known Rauch's Theorem),
which do not work when $X$ is a general Alexandrov space. Of course, in our proof
relying on Toponogov's Theorem, many arguments can be removed
in the case where $X$ is a Riemannian manifold (i.e. the proof can be much shorter).}
\end{remark}

As an application of Theorem A, we supply a way to judge whether a point in a CBB-type Alexandrov space is a Riemannian one (i.e. its space of directions is a unit sphere, see Section 1).

\vskip2mm

\noindent {\bf Theorem C.} {\it Let $x$  be an interior point in a complete CBB-type Alexandrov space.
If there is a function $\chi(\varepsilon)$ with $\varepsilon>0$ and $\chi(\varepsilon)\to 0$ as $\varepsilon\to0$
such that, for all $[pq],[pr]\subset B_x(\varepsilon)$ with $\angle qpr=\frac\pi2$,
$$
\left|\frac{|qr|^2}{|pq|^2+|pr|^2}-1\right|< \chi(\varepsilon),
$$
then $x$ is a Riemannian point. }

\vskip2mm

We would like to point out that the condition for $x$ to be a Riemannian point in Theorem C should just be sufficient, but not necessary.

As an almost immediate corollary of Theorem C, we have the following known result (\cite{AKP}).

\vskip2mm

\noindent {\bf Corollary D.} {\it Let $x$  be an interior point in a complete CBB-type Alexandrov space $X$.
If in addition $x$ is a CBA-type point, then $x$ is a Riemannian point
(as a result, if each point in $X^\circ$ is a CBA-type one, then $X^\circ$ is a manifold). }

\vskip3mm

In the rest, the paper is organized as follows. In Section 1, we will recall some basic conceptions on Alexandrov spaces.
In Sections 2 and 3, we will give a proof of Theorem A for curvature `$\geq\ (\leq)\ k$' and `$=k$' respectively.
In Section 4, we shall give proofs for Theorem C and Corollary D.


\section{On Alexandrov spaces}

In this section, we will recall the definition and some basic properties of Alexandrov spaces, which
will be used in the proof of Theorem A.

First of all, it is well known that Theorem \ref{thm0.1} has the following twin version.

\begin{theorem}[\cite{CE}] \label{thm1.1}
Let $M$ be a complete Riemannian manifold. Then
$\sec_M\geq k\ (\leq k)$  if and only if for any $x\in M$ there exists a neighborhood $U_x$ of $x$ such that

\vskip1mm

\noindent{\rm (1.1)}\ \  for any $q\in U_x$, $[pr]\subset U_x$, and $\tilde q\in\mathbb S^2_k$ and $[\tilde p\tilde r]\subset \mathbb S^2_k$ with $|\tilde q\tilde p|=|qp|$, $|\tilde q\tilde r|=|qr|$ and $|\tilde p\tilde r|=|pr|$, we have that, for any $s\in [pr]$ and $\tilde s\in [\tilde p\tilde r]$ with $|ps|=|\tilde p\tilde s|$,
$$|qs|\geq\ (\leq) \ |\tilde q\tilde s|.$$
\end{theorem}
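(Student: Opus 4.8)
The plan is to deduce Theorem \ref{thm1.1} from Theorem \ref{thm0.1} by proving that, on a suitable neighborhood, condition (1.1) is equivalent to the local Law-of-Cosines comparison of Theorem \ref{thm0.1}; since the latter already characterizes $\sec_M\ge k$ (resp. $\le k$), chaining the two equivalences gives the statement. Throughout I work on a convex neighborhood $U_x$ on which minimal geodesics between points are unique and depend continuously on their endpoints (and, in the upper-bound case, on which every triple has perimeter $<2\pi/\sqrt k$, so that comparison triangles in $\mathbb S^2_k$ exist). Shrinking $U_x$ this is harmless, and it is exactly why both statements are purely local.

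First I treat the short implication (1.1) $\Rightarrow$ Theorem \ref{thm0.1}, which needs only the first variation of arclength. Fix a hinge $[pq],[pr]$ with angle $\theta=\angle qpr$, parametrize $[pr]$ by arclength as $\gamma(t)$ from $p$, and let $\tilde T=\triangle\tilde p\tilde q\tilde r\subset\mathbb S^2_k$ be the comparison triangle with the three side lengths of $\triangle pqr$, so that $\angle\tilde q\tilde p\tilde r=\tilde\angle_k qpr$. Setting $f(t)=|q\gamma(t)|$ and $g(t)=|\tilde q\tilde\gamma(t)|$, where $\tilde\gamma(t)\in[\tilde p\tilde r]$ satisfies $|\tilde p\tilde\gamma(t)|=t$, condition (1.1) gives $f\ge g$ (resp. $f\le g$) while $f(0)=g(0)=|pq|$. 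The first variation formula yields $f'(0^+)=-\cos\theta$ and $g'(0^+)=-\cos\tilde\angle_k qpr$, so comparing one-sided derivatives gives $\theta\ge\tilde\angle_k qpr$ (resp. $\le$); by the monotonicity of the Law of Cosines in the included angle this angle comparison is precisely the inequality in Theorem \ref{thm0.1}.

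For the converse, Theorem \ref{thm0.1} $\Rightarrow$ (1.1), I rephrase the hypothesis as the angle comparison $\angle\ge\tilde\angle_k$ (resp. $\le$) at every vertex of every geodesic triangle in $U_x$. Given $q$, $[pr]$ and an interior point $s\in[pr]$, the segment $[qs]$ splits $\triangle pqr$ into $\triangle pqs$ and $\triangle sqr$, and since $s$ is interior to the geodesic $[pr]$ one has $\angle qsp+\angle qsr=\pi$. Applying the angle comparison to the two sub-triangles gives
\begin{equation*}
\tilde\angle_k qsp+\tilde\angle_k qsr\le\angle qsp+\angle qsr=\pi
\end{equation*}
(resp. $\ge\pi$). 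Feeding this into Alexandrov's Lemma in $\mathbb S^2_k$ — which compares the two model sub-triangles built from the true lengths $|pq|,|qr|,|ps|,|sr|,|qs|$ against the single comparison triangle $\tilde T$ of (1.1), converting an angle-sum $\le\pi$ (resp. $\ge\pi$) at the subdivision point into $|qs|\ge|\tilde q\tilde s|$ (resp. $\le$) — produces exactly (1.1).

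The main obstacle is this converse direction, specifically the passage from infinitesimal/angle data to the global distance inequality along all of $[pr]$: this is the content of Alexandrov's Lemma, where the geometry of $\mathbb S^2_k$ enters in a non-formal way and which must be verified uniformly for $k>0,=0,<0$ and for both directions of the bound. A secondary point requiring care is the reduction to a convex $U_x$: one needs $[qs]$ to exist, to be unique, and to meet $[pr]$ at $s$ so that the splitting into two genuine triangles with supplementary angles at $s$ is valid, and the locality of both Theorem \ref{thm0.1} and (1.1) is what legitimizes this reduction. In the smooth case these steps could instead be extracted from the Rauch comparison theorem applied to Jacobi fields along $[qs]$, but the argument above has the merit of extending essentially verbatim to the Alexandrov setting treated in the rest of the paper.
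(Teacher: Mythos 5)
Your proposal is correct, but there is no internal proof to measure it against: the paper states Theorem \ref{thm1.1} as a known result, attributing it to the reference [CE], and offers only the remark that it is the ``twin version'' of Theorem \ref{thm0.1}. What you have supplied is precisely the classical equivalence, local on a convex neighborhood, between the hinge (Law-of-Cosines) form of the comparison condition and the side/distance form (1.1) --- the same equivalence the paper itself invokes without proof in Theorem \ref{thm1.3}, where the distance condition is declared ``equivalent'' to the angle comparison for all triangles in $U_x$. Both halves of your argument are the standard ones: the implication (1.1) $\Rightarrow$ Theorem \ref{thm0.1} via the first variation formula (from $f\ge g$ with $f(0)=g(0)$ one gets $f'(0^+)\ge g'(0^+)$, hence $\angle qpr\ge\tilde\angle_k qpr$, which by monotonicity of the model side in the included angle is the hinge inequality), and the converse via subdividing $\triangle pqr$ at $s\in[pr]^\circ$, using that the angles $\angle qsp+\angle qsr=\pi$ at an interior point of a smooth geodesic, and feeding the resulting model-angle sum $\tilde\angle_k qsp+\tilde\angle_k qsr\le\pi$ (resp. $\ge\pi$) into Alexandrov's lemma to obtain $|qs|\ge|\tilde q\tilde s|$ (resp. $\le$). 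You state the direction of Alexandrov's lemma correctly (angle sum $\le\pi$ at the gluing vertex yields $|qs|\ge|\tilde q\tilde s|$), and you correctly flag the two points needing care: existence of comparison triangles for $k>0$ (perimeter $<2\pi/\sqrt k$, handled by shrinking $U_x$) and uniqueness/convexity so that the two sub-triangles are genuine triangles in $U_x$. One small remark: since the paper takes Theorem \ref{thm0.1} itself as known, your reduction is legitimate as a proof of Theorem \ref{thm1.1} relative to the paper's stated background, and it has the virtue, which you note, that the subdivision-plus-Alexandrov-lemma mechanism is exactly the one that survives in the Alexandrov-space setting of Definition \ref{dfn1.2}, whereas a Rauch/index-form proof would not (cf. Remark \ref{rem0.5}).
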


We now, based on Theorem \ref{thm1.1}, can give the definition of the Alexandrov space in Theorem A.

\begin{defn}[cf. \cite{BGP}, \cite{AKP}] \label{dfn1.2} {\rm
A locally compact length space $X$ is called an {\it Alexandrov space}
if for any $x\in X$ there is a real number $k_x$ and a neighborhood $U_x$ of $x$
such that the corresponding condition (1.1) with respect to $\mathbb S^2_{k_x}$ holds, and
$X$ is said to be {\it of curvature $\text{\rm cur}_X\geq k_x$ or $\leq k_x$ on $U_x$} according to `$|qs|\geq|\tilde q\tilde s|$' or `$|qs|\leq|\tilde q\tilde s|$' respectively.}
\end{defn}

Given an Alexandrov space $X$, we call $x\in X$  a {\it CBB-type} (resp. {\it CBA-type}) {\it point} when $\text{\rm cur}_X\geq k_x$ (resp. $\leq k_x$) around $x$; and we call $X$ a {\it CBB-type} (resp. {\it CBA-type}) {\it Alexandrov space} if each $x\in X$ is a CBB-type (resp. CBA-type) point.

It is obvious that Alexandrov spaces include Riemannian manifolds. To a general Alexandrov space $X$,
a significant difference from a Riemannian manifold is that a geodesic (locally shortest path) on $X$ might not be
prolonged even when $X$ is complete (in the sense of distance topology).

In an Alexandrov space $X$, we can define an angle $\angle yxz$ between two minimal geodesics $[xy]$ and $[xz]$.
Assume that $X$ is of cur$_X\geq k_x$ around $x$. Let $a\in[xy]$ and $b\in [xz]$.
By condition (1.1), $\tilde\angle_{k_x} axb$ is non-decreasing when $a,b$ converge to $x$ (\cite{BGP}),
i.e. $\lim\limits_{a,b\to x} \tilde\angle_{k_x} axb$ exists. And note that, for any $k\neq k_x$,
$\lim\limits_{a,b\to x}\tilde\angle_{k} axb=\lim\limits_{a,b\to x}\tilde\angle_{k_x}axb.$
So, we can define
\stepcounter{equation}
\begin{equation}\label{eqn1.2}
 \angle yxz\triangleq\lim_{a,b\to x} \tilde\angle_{k_x} axb.
\end{equation}
Similarly, we can also define $\angle yxz $ if $X$ is of cur$_X\leq k_x$ around $x$
because, in such a situation, $\tilde\angle_{k_x} axb$ is non-increasing when $a,b$ converge to $x$ (\cite{AKP}).

By (\ref{eqn1.2}) and Definition \ref{dfn1.2}, it is not hard to see that Theorem \ref{thm0.1} also holds for $X$ (\cite{BGP},\cite{AKP}):

\begin{theorem} \label{thm1.3}
Let $X$ be a complete CBB-type Alexandrov space. Then $X$ is of $\text{cur}_X\geq\ (\leq)\ k$ if and only if
for any $x\in X$ there exists a neighborhood $U_x$ of $x$ such that the inequality in Theorem \ref{thm0.1} holds for any $[pq],[pr]\subset U_x$,
or equivalently, for any triangle $\triangle pqr\subset U_x$ (i.e. a union of three minimal geodesics $[pq],[qr],[pr]$) we have that
$\angle qpr\geq\ (\leq) \ \tilde\angle_{k} qpr$, $\angle pqr\geq\ (\leq) \ \tilde\angle_{k} pqr$ and $\angle prq\geq\ (\leq) \ \tilde\angle_{k} prq$.
\end{theorem}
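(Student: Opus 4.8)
The plan is to establish the two equivalences which, chained together, give the statement: first, that for a fixed hinge the Law of Cosine inequality of Theorem \ref{thm0.1} is merely a rewriting of the comparison ``$\tilde\angle_{k}qpr\le\ (\ge)\ \angle qpr$''; and second, that the defining condition ``$\text{cur}_X\ge\ (\le)\ k$ on $U_x$'' of Definition \ref{dfn1.2} is equivalent to this angle comparison holding for every triangle in $U_x$. Together these turn ``$\text{cur}_X\ge\ (\le)\ k$'' into the Theorem \ref{thm0.1} inequality and into the three-vertex triangle comparison.

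For the first equivalence I would argue purely algebraically. By definition $\tilde\angle_{k}qpr$ is the angle at $\tilde p$ in the model triangle of $\mathbb S^2_k$ with side lengths $|pq|,|pr|,|qr|$, so it satisfies the Law of Cosine with equality when the angle is taken to be $\tilde\angle_{k}qpr$. Substituting this identity for the term $\cos(\sqrt k|qr|)$ (resp. $|qr|^2$, $\cosh(\sqrt{-k}|qr|)$) into the inequality of Theorem \ref{thm0.1} cancels the two factors not involving the angle and reduces it, in all three cases, to an inequality of the form $c\cos\tilde\angle_{k}qpr\ge\ (\le)\ c\cos\angle qpr$ with a coefficient $c>0$ (namely $\sin(\sqrt k|pq|)\sin(\sqrt k|pr|)$, or $2|pq||pr|$, or $\sinh(\sqrt{-k}|pq|)\sinh(\sqrt{-k}|pr|)$, all positive for $[pq],[pr]\subset U_x$). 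Cancelling $c$ and using that $\cos$ is strictly decreasing on $[0,\pi]$ yields $\tilde\angle_{k}qpr\le\ (\ge)\ \angle qpr$. Since Theorem \ref{thm0.1} quantifies over all hinges $[pq],[pr]\subset U_x$, letting each of $p,q,r$ play the role of the apex turns this into the three-vertex triangle comparison stated at the end of Theorem \ref{thm1.3}.

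For the second (and substantive) equivalence I would use the angle defined in (\ref{eqn1.2}). Assume first that $\text{cur}_X\ge k$ on $U_x$, i.e. condition (1.1) holds with respect to $\mathbb S^2_k$; then the monotonicity recalled just before (\ref{eqn1.2}), applied with $k$ in the role of $k_x$ (\cite{BGP}), gives that $\tilde\angle_{k}apb$ is non-decreasing as $a\in[pq],\ b\in[pr]$ tend to $p$. Consequently $\tilde\angle_{k}qpr$ (the value at $a=q,\ b=r$) is the smallest member of this monotone family, while its supremum, the limit, is $\angle qpr$; hence $\angle qpr\ge\tilde\angle_{k}qpr$. The case $\text{cur}_X\le k$ is identical with the monotonicity reversed (\cite{AKP}). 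The reverse implication — that the angle comparison for all triangles in $U_x$ forces condition (1.1), i.e. the point-to-geodesic comparison $|qs|\ge\ (\le)\ |\tilde q\tilde s|$ — is the content of the foundational (local Toponogov) theorem, which I would invoke from \cite{BGP} for $\ge k$ and \cite{AKP} for $\le k$.

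The main obstacle is precisely this reverse implication. The naive attempt — comparing the sub-triangles $\triangle pqs$ and $\triangle rqs$ via Alexandrov's Lemma — would need to control the sum $\tilde\angle_{k}psq+\tilde\angle_{k}qsr$ against $\pi$; but the angle comparison only bounds each summand by the corresponding true angle, and the true adjacent angles merely satisfy $\angle psq+\angle qsr\ge\pi$ at an interior point of the minimal geodesic $[pr]$, so one cannot recover condition (1.1) by a single local step. Reconstructing the distance comparison genuinely requires the development/subdivision technique of \cite{BGP},\cite{AKP} (subdividing $[pr]$, developing the resulting chain of model triangles, and passing to the limit), and this is the step I expect to absorb essentially all of the real work, the remainder of Theorem \ref{thm1.3} being the routine Law of Cosine bookkeeping above.
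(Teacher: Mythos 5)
Your proposal is essentially the paper's own treatment: the paper does not prove Theorem \ref{thm1.3} at all, but introduces it with ``By (\ref{eqn1.2}) and Definition \ref{dfn1.2}, it is not hard to see\dots'' and a citation to [BGP] and [AKP] --- which is exactly your combination of the Law-of-Cosine/angle-comparison bookkeeping, the monotonicity of $\tilde\angle_k apb$ behind (\ref{eqn1.2}), and an appeal to the classical equivalence for the converse. Your algebraic reduction and your forward (monotonicity) direction are both correct, including the needed positivity of $\sin(\sqrt k|pq|)\sin(\sqrt k|pr|)$ for small $U_x$.

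One correction to your ``main obstacle'' paragraph, though: you overstate the difficulty of the reverse implication \emph{in the setting of this theorem}. You write that the true adjacent angles merely satisfy $\angle psq+\angle qsr\geq\pi$, so that a single application of Alexandrov's lemma cannot recover (1.1) and the full development/subdivision machinery is required. But Theorem \ref{thm1.3} assumes $X$ is CBB-type, so angles exist and Lemma \ref{lem1.6} gives the \emph{equality} $\angle qsp+\angle qsr=\pi$ at every $s\in[pr]^\circ$. For the $\geq k$ case, the hypothesized comparison $\tilde\angle_k qsp\leq\angle qsp$ and $\tilde\angle_k qsr\leq\angle qsr$ then forces $\tilde\angle_k qsp+\tilde\angle_k qsr\leq\pi$, and one application of Alexandrov's lemma to the two model triangles glued along $[\tilde q\tilde s]$ yields $|qs|\geq|\tilde q\tilde s|$, i.e.\ condition (1.1), in a single local step; for the $\leq k$ case the needed inequality $\tilde\angle_k qsp+\tilde\angle_k qsr\geq\pi$ follows from the comparison in the other direction together with $\angle qsp+\angle qsr\geq\pi$, which holds automatically by the triangle inequality for angles. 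The development/subdivision technique is genuinely needed only when such an adjacent-angle control is unavailable --- precisely the situation flagged in the paper's remark after Theorem 1.3$'$, where sufficiency for $\text{cur}_X\geq k$ in a CBA-type space requires the extra hypothesis $\angle qsp+\angle qsr=\pi$. Since you defer this step to [BGP]/[AKP] anyway, your proof is not broken, but the CBB-type hypothesis is doing more work for you than your diagnosis acknowledges.
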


\noindent{\bf Theorem 1.3$'$.} {\it
Let $X$ be a complete CBA-type Alexandrov space. Then $X$ is of $\text{cur}_X\leq k$ (resp. $\geq k$) if and only if (resp. only if)
the condition for $\text{cur}_X\leq k$ (resp. $\geq k$) in Theorem \ref{thm1.3} holds.}

\vskip2mm

In Theorem 1.3$'$, the ``if'' (i.e. the sufficiency of the condition) for $\text{cur}_X\geq k$ needs an additional condition that
$\angle qsp+\angle qsr=\pi$ for all $s\in [pr]^\circ$ ([BGP], [AKP]).

And similar to Riemannian case, we have the first variation formula on Alexandrov spaces.

\begin{lemma}[\cite{BGP},\cite{AKP}]\label{lem1.4}
 	Let $X$ be a complete Alexandrov space.
 	Then for any $x\in X$, there is a neighborhood $U_x$ of $x$ such that, for any $[pq],[pr]\subset U_x$ and $p_i\in [pr]$ with $p_i\to p$ as $i\to\infty$,
 	\begin{equation}\label{eqn1.4}
 	\begin{cases}
 	|qp_i|=|qp|-|pp_i| \cdot\cos\angle qpr +o(|pp_i|), & \text{ $x$ is a CBA-type point;}  \\
 	|qp_i|=|qp|-|pp_i|\cdot \cos|\uparrow_p^r\Uparrow_p^q| +o(|pp_i|), & \text{ $x$ is a CBB-type point.} \\
 	\end{cases}\end{equation}
\end{lemma}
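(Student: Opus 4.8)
The plan is to reduce the statement, in both the CBA and the CBB case, to the convergence of a single comparison angle, and then to supply the two different geometric inputs that identify its limit.

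First I would set up the reduction. Parametrize $[pr]$ by arc length as a geodesic $\gamma$ with $\gamma(0)=p$, and write $p_i=\gamma(t_i)$ with $t_i=|pp_i|\to0$. For each $i$ form the comparison angle $\tilde\angle_k qpp_i$ determined by the three lengths $|qp|,\ t_i,\ |qp_i|$. Since $\big||qp_i|-|qp|\big|\le t_i$ by the triangle inequality, one may write $|qp_i|=|qp|+\delta_i$ with $\delta_i=O(t_i)$ and expand the corresponding Law of Cosine of $\mathbb S^2_k$ to first order in $t_i$; as $\sin(\sqrt k\,|qp|)$ (resp. $\sinh(\sqrt{-k}\,|qp|)$) is nonzero on a small enough $U_x$, this yields $|qp_i|=|qp|-t_i\cos\big(\tilde\angle_k qpp_i\big)+o(t_i)$, the curvature entering only through higher-order terms. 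Consequently (\ref{eqn1.4}) is \emph{equivalent} to $\tilde\angle_k qpp_i\to\angle qpr$ when $x$ is a CBA-type point, and to $\tilde\angle_k qpp_i\to|\uparrow_p^r\Uparrow_p^q|$ when $x$ is a CBB-type point.

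For the CBA case this is quick: in a CBA neighborhood minimal geodesics are locally unique, so $\angle qpr$ is unambiguous, and by the monotonicity recalled after (\ref{eqn1.2}) the quantity $\tilde\angle_k qpp_i$ is non-increasing as $p_i\to p$ while remaining $\ge\angle qpr$; together with the standard fact that a comparison angle with one fixed vertex tends to the genuine angle, this gives $\tilde\angle_k qpp_i\to\angle qpr$, and the reduction finishes the CBA case. For the CBB case put $\theta:=|\uparrow_p^r\Uparrow_p^q|$. The \emph{easy} inequality $\limsup_i\tilde\angle_k qpp_i\le\theta$ follows from monotonicity: for \emph{every} minimal geodesic $[pq]$ one has $\tilde\angle_k qpp_i\le\angle qpp_i=\angle(\uparrow_p^r,\uparrow_p^q)$, and minimizing over $[pq]$ gives the bound.

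The remaining, and genuinely harder, task is the \emph{lower} estimate $\liminf_i\tilde\angle_k qpp_i\ge\theta$, which cannot come from monotonicity (that tool only ever bounds the comparison angle \emph{above} by $\theta$). Here I would choose minimal geodesics $[qp_i]$ and, by precompactness of minimal geodesics (Arzel\`a--Ascoli), pass to a subsequence with $[qp_i]\to[qp]$, a minimal geodesic whose direction $\xi$ at $p$ lies in $\Uparrow_p^q$, so that $\angle(\xi,\uparrow_p^r)\ge\theta$. At the vertex $p_i$ the comparison inequality together with the thinness of $\triangle qpp_i$ (whose area is $o(1)$) gives $\angle qp_ip\ge\tilde\angle_k qp_ip=\pi-\tilde\angle_k qpp_i-o(1)$. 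Transferring the angle at the moving base point $p_i$ back to the fixed vertex $p$ then yields $\limsup_i\angle qp_ip\le\pi-\theta$, whence $\liminf_i\tilde\angle_k qpp_i\ge\theta$; combined with the easy half this gives $\tilde\angle_k qpp_i\to\theta$ and, through the first paragraph, the formula. \textbf{The main obstacle is exactly this last transfer}: passing the angle from the moving vertex $p_i$ to $p$, since all the comparison and monotonicity estimates are anchored at a fixed vertex. I expect to resolve it using the convergence $[qp_i]\to[qp]$ together with the (semi)continuity of directions of minimal geodesics in Alexandrov spaces, which is where the completeness and local compactness of $X$ are really used.
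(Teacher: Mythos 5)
There is no proof in the paper to compare against: Lemma \ref{lem1.4} is quoted from \cite{BGP} and \cite{AKP} without proof. Judged on its own, your proposal is the standard argument and is essentially correct. The reduction via the model Law of Cosines to the convergence of $\tilde\angle_k qpp_i$ is valid (and to first order in $|pp_i|$ the value of $k$ is immaterial, consistent with the remark after (\ref{eqn1.2})). In the CBB case both halves are sound: the bound $\limsup_i \tilde\angle_k qpp_i \le |\uparrow_p^r\Uparrow_p^q|$ by minimizing the angle comparison over all $[pq]$, and the reverse bound via $[qp_i]\to[qp]$ and $\angle qp_ip \ge \tilde\angle_k qp_ip = \pi - \tilde\angle_k qpp_i - o(1)$. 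The ``transfer'' you flagged as the main obstacle is closed by exactly two facts the paper records: since $p_i\in[pr]^\circ$ and $x$ is a CBB-type point, Lemma \ref{lem1.6} gives $\angle qp_ip=\pi-\angle qp_ir$, and the lower semicontinuity of angles stated before Lemma \ref{lem1.7}, applied to $[p_iq]\to[pq]$ and $[p_ir]\to[pr]$, gives $\liminf_i\angle qp_ir\ge\angle(\xi,\uparrow_p^r)\ge|\uparrow_p^r\Uparrow_p^q|$ (your $\xi$ being the direction of the limit geodesic at $p$), whence $\limsup_i\angle qp_ip\le\pi-|\uparrow_p^r\Uparrow_p^q|$. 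Note that you cannot bound $\limsup_i\angle qp_ip$ by semicontinuity directly, since the segments $[p_ip]$ collapse to a point, so the detour through the supplementary angle is obligatory; and since every subsequence admits a further subsequence along which this argument runs, the full sequence of comparison angles converges to $|\uparrow_p^r\Uparrow_p^q|$.

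The one soft spot is the CBA case: the ``standard fact that a comparison angle with one fixed vertex tends to the genuine angle'' is, by your own reduction, literally equivalent to the CBA half of the lemma, so as written that case is circular. It is closed in three lines with the monotonicity you already invoked: given $\epsilon>0$, choose $q'\in[pq]\setminus\{p\}$ and $t_0>0$ such that $\tilde\angle_k q'pp_i\le\angle qpr+\epsilon$ whenever $|pp_i|\le t_0$ (possible because $\tilde\angle_k q'pp_i$ decreases to $\angle qpr$ as $q',p_i\to p$ and is non-decreasing in $|pp_i|$ for fixed $q'$); the definition of the comparison angle then gives $|q'p_i|\le|q'p|-|pp_i|\cos(\angle qpr+\epsilon)+o(|pp_i|)$, so $|qp_i|\le|qq'|+|q'p_i|\le|qp|-|pp_i|\cos(\angle qpr+\epsilon)+o(|pp_i|)$, which together with the comparison inequality $\tilde\angle_k qpp_i\ge\angle qpr$ forces $\tilde\angle_k qpp_i\to\angle qpr$. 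With that inserted, your proof is complete.
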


In this paper, for a given $[xy]$, $\uparrow_x^y$ denotes its direction at $x$ (in Rimannian case, $\uparrow_x^y$ is just
the unit tangent vector of $[xy]$ from $x$ to $y$); and $\Uparrow_x^y$ denotes
the union of directions of all minimal geodesics from $x$ to $y$.
Note that by Definition \ref{dfn1.2}, in the case where $X$ has an upper curvature bound on $U$,
there is a unique minimal geodesic between any two distinct points in $U$.

\vskip2mm

\begin{remark}{\rm
For $\text{cur}_X\geq k$, Theorem \ref{thm1.3} (and \ref{thm0.1}) guarantees a global version of itself {\rm(\cite{BGP}, cf. \cite{Pl} and \cite{Wa})}, which is the well-known Toponogov's Theorem;
namely, $\text{cur}_X\geq k$ implies that (1.1) holds for any $q\in X$ and $[pr]\subset X$. However, there is no global version for $\text{cur}_X\leq k$ in general (\cite{AKP}).
As a result, (\ref{eqn1.4}) has a global version (the first variation formula) on a complete Alexandrov space $X$ with $\text{cur}_X\geq k$, but does not on $X$ with only an upper curvature bound.}
\end{remark}

Moreover, the angles on Alexandrov spaces satisfy the following property.

\begin{lemma}[\cite{BGP}, \cite{AKP})]\label{lem1.6}
Let $X$ be a complete Alexandrov space.
Then for any $x\in X$, there is a neighborhood $U_x$ of $x$ such that, for any
$[qp],[r_1r_2]\subset U_x$ with $p\in[r_1r_2]^\circ$,
\begin{equation}\label{eqn1.5}
\begin{cases}
\angle qpr_1+\angle qpr_2\geq\pi, & \text{  $x$ is a CBA-type point;} \\
\angle qpr_1+\angle qpr_2=\pi, & \text{ $x$ is a CBB-type point.} \\
\end{cases}
\end{equation}
As a result, if $x$ is a CBB-type point and if in addition $|qp|=|q[r_1r_2]|$, then $\angle qpr_1=\angle qpr_2=\frac\pi2.$
\end{lemma}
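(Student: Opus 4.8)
The plan is to prove each line of (\ref{eqn1.5}) by separating it into the inequality ``$\ge\pi$'', which will hold for both types of points, and the reverse inequality ``$\le\pi$'', which is available only in the CBB case; the final ``as a result'' assertion will then drop out of the first variation formula. First I would record the auxiliary fact that $\angle r_1pr_2=\pi$. Indeed, since $p\in[r_1r_2]^\circ$, any $a\in[pr_1]$ and $b\in[pr_2]$ close to $p$ lie on the single minimal geodesic $[r_1r_2]$, so $|ab|=|ap|+|pb|$ and the comparison triangle in $\mathbb S^2_k$ degenerates, giving $\tilde\angle_k apb=\pi$ for all such $a,b$; passing to the limit gives $\angle r_1pr_2=\pi$ irrespective of the type of $x$. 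Combining this with the triangle inequality for angles at $p$ (a general property valid for both CBB- and CBA-type points), $\angle r_1pr_2\le\angle qpr_1+\angle qpr_2$, already yields $\angle qpr_1+\angle qpr_2\ge\pi$. This settles the CBA-type line of (\ref{eqn1.5}) completely.

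For the reverse inequality when $x$ is a CBB-type point, I would fix $a\in[qp]$, $b_1\in[pr_1]$, $b_2\in[pr_2]$ near $p$ and apply condition (1.1) to the point $a$ and the geodesic $[b_1b_2]$, which passes through $p$: taking the comparison triangle $\triangle\tilde a\tilde b_1\tilde b_2\subset\mathbb S^2_k$ together with the point $\tilde p\in[\tilde b_1\tilde b_2]$ satisfying $|\tilde b_1\tilde p|=|b_1p|$, the lower curvature bound gives $|ap|\ge|\tilde a\tilde p|$. The next step is Alexandrov's lemma: among all cevian configurations with the four sub-triangle side lengths $|pb_1|,|pb_2|,|ab_1|,|ab_2|$ fixed and $b_1,p,b_2$ collinear, the sum $\tilde\angle_k apb_1+\tilde\angle_k apb_2$ of the two comparison angles at $p$ is a \emph{decreasing} function of the cevian length $|ap|$, equal to $\pi$ exactly when $|ap|=|\tilde a\tilde p|$. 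Hence $|ap|\ge|\tilde a\tilde p|$ forces $\tilde\angle_k apb_1+\tilde\angle_k apb_2\le\pi$. Letting $a,b_1,b_2\to p$ and using that each $\tilde\angle_k apb_i$ increases to $\angle qpr_i$, I obtain $\angle qpr_1+\angle qpr_2\le\pi$, and therefore the claimed equality in the CBB-type case.

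For the ``as a result'' clause, suppose $x$ is CBB-type and $|qp|=|q[r_1r_2]|$. Then $p$ minimizes the function $s\mapsto|q\gamma(s)|$ along an arclength parametrization $\gamma$ of $[r_1r_2]$, so its one-sided derivatives are $\ge0$ in the direction of $r_2$ and $\le0$ in the direction of $r_1$. Substituting into the CBB first variation formula (Lemma \ref{lem1.4}) gives $\cos|\uparrow_p^{r_i}\Uparrow_p^q|\le0$, i.e. $|\uparrow_p^{r_i}\Uparrow_p^q|\ge\frac\pi2$; since $\uparrow_p^q\in\Uparrow_p^q$ we have $\angle qpr_i\ge|\uparrow_p^{r_i}\Uparrow_p^q|\ge\frac\pi2$ for $i=1,2$. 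Together with $\angle qpr_1+\angle qpr_2=\pi$, just proved, this forces $\angle qpr_1=\angle qpr_2=\frac\pi2$.

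I expect the main obstacle to be the reverse inequality in the second paragraph. The tempting shortcut -- comparing the single comparison angle $\tilde\angle_k apb_i$ of the real triangle with the corresponding angle at $\tilde p$ in the model triangle by monotonicity in the side $|ap|$ -- does \emph{not} work, because a comparison angle is not monotone in an adjacent side length. The correct monotonicity is exactly the one supplied by Alexandrov's lemma applied to the two sub-triangles glued along the cevian $[ap]$, and it is precisely here that the lower bound enters through (1.1) (Toponogov's comparison) to produce $|ap|\ge|\tilde a\tilde p|$. No analogous length comparison is available for CBA-type points, which explains why only the inequality $\ge\pi$ survives in that case.
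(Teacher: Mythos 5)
Your proof is correct, but note that the paper itself gives no proof of Lemma \ref{lem1.6}: it is imported as a standard fact from [BGP] and [AKP], so there is no internal argument to compare against. What you have written is essentially the classical textbook proof, and it is sound: the degenerate comparison triangle along $[r_1r_2]$ giving $\angle r_1pr_2=\pi$, plus the triangle inequality for (upper) angles, yields $\angle qpr_1+\angle qpr_2\geq\pi$ in both the CBB and CBA cases; the reverse inequality in the CBB case via condition (1.1) applied to $a$ and the subsegment $[b_1b_2]\ni p$, combined with Alexandrov's lemma and the monotone convergence $\tilde\angle_k apb_i\nearrow\angle qpr_i$; and the first variation formula (Lemma \ref{lem1.4}) for the final perpendicularity claim. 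One small caveat: your phrasing of Alexandrov's lemma overstates it. The sum $\tilde\angle_k apb_1+\tilde\angle_k apb_2$, with the four outer side lengths fixed, need not be a genuinely monotone function of the cevian length $|ap|$ (each summand can even be increasing on part of its domain, as you yourself observe for a single comparison angle). What the lemma actually gives is the equivalence: the sum is $\geq\pi$ precisely when $|ap|\leq|\tilde a\tilde p|$ and $\leq\pi$ precisely when $|ap|\geq|\tilde a\tilde p|$, with equality corresponding to $|ap|=|\tilde a\tilde p|$. Since the implication you use is exactly ``$|ap|\geq|\tilde a\tilde p|$ forces sum $\leq\pi$,'' this is a matter of wording, not a gap. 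A second, harmless implicit point: condition (1.1) near $x$ holds for the local bound $k_x$, so the comparison in your second paragraph should be read in $\mathbb S^2_{k_x}$; this costs nothing because, as the paper notes before (\ref{eqn1.2}), the limit of comparison angles is independent of the model curvature, so the resulting equality of angles is model-free, as the statement of (\ref{eqn1.5}) requires.
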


And it is easy to see that the angles on Alexandrov spaces have semi-continuity. Namely, given a complete Alexandrov space $X$
which has a lower (resp. upper) curvature bound on a neighborhood $U$, if $[p_iq_i]\to [pq]$ and $[p_ir_i]\to [pr]$ as $i\to\infty$ in $U$,
then $\angle qpr\leq\liminf\limits_{i\to\infty}\angle q_ip_ir_i$ (resp. $\angle qpr\geq\limsup\limits_{i\to\infty}\angle q_ip_ir_i$).
This, together with (\ref{eqn1.5}), implies the following continuity.

\begin{lemma}\label{lem1.7}
Let $X$ be a complete Alexandrov space, and let $U$ be a neighborhood in $X$.
Suppose that $[qp],[q_ip_i], [r_1r_2]\subset U$ with $p,p_i\in[r_1r_2]^\circ$ and $[q_ip_i]\to[qp]$ as $i\to\infty$.
If $X$ has a lower curvature bound on $U$,
or if $X$ has an upper curvature bound on $U$ and $\angle qpr_1+\angle qpr_2=\pi$, then
$$\lim_{i\to \infty}\angle q_ip_ir_1=\angle qpr_1 \text{ and } \lim_{i\to \infty}\angle q_ip_ir_2=\angle qpr_2.$$
\end{lemma}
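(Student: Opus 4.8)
The plan is to combine the one-sided semi-continuity of angles stated just above with the angle-sum relations of Lemma \ref{lem1.6} to squeeze the $\liminf$ (resp.\ $\limsup$) of the two angle sequences against $\angle qpr_1$ and $\angle qpr_2$. First I would record a preliminary reduction. Since $[q_ip_i]\to[qp]$ forces the endpoints to converge, in particular $p_i\to p$; and since $p,p_i\in[r_1r_2]^\circ$ all lie on the one fixed minimal geodesic $[r_1r_2]$, the sub-segments satisfy $[p_ir_j]\to[pr_j]$ automatically for $j=1,2$. Hence both pairs of geodesics converge, and the semi-continuity statement applies to each of $\angle q_ip_ir_1$ and $\angle q_ip_ir_2$ (shrinking $U$ if necessary so that Lemma \ref{lem1.6} holds on it).

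In the lower-curvature-bound case, semi-continuity gives $\angle qpr_j\le\liminf_i\angle q_ip_ir_j$, while (\ref{eqn1.5}) gives $\angle qpr_1+\angle qpr_2=\pi$ and $\angle q_ip_ir_1+\angle q_ip_ir_2=\pi$ for every $i$. Adding the two semi-continuity inequalities and using $\liminf_i a_i+\liminf_i b_i\le\liminf_i(a_i+b_i)$,
\[
\pi=\angle qpr_1+\angle qpr_2\le\liminf_i\angle q_ip_ir_1+\liminf_i\angle q_ip_ir_2\le\liminf_i(\angle q_ip_ir_1+\angle q_ip_ir_2)=\pi ,
\]
so every inequality is an equality and $\angle qpr_j=\liminf_i\angle q_ip_ir_j$. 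To upgrade this to a genuine limit I would argue by contradiction: if some subsequence of $\angle q_ip_ir_1$ tended to $L>\angle qpr_1$, the constant-sum relation would force the corresponding subsequence of $\angle q_ip_ir_2$ to tend to $\pi-L<\angle qpr_2=\liminf_i\angle q_ip_ir_2$, which is impossible. Hence $\limsup_i\angle q_ip_ir_1\le\angle qpr_1$, the limit exists, and symmetrically for $r_2$.

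In the upper-curvature-bound case the argument runs identically with the inequalities reversed. Upper semi-continuity gives $\angle qpr_j\ge\limsup_i\angle q_ip_ir_j$; the hypothesis supplies $\angle qpr_1+\angle qpr_2=\pi$; and (\ref{eqn1.5}) now only gives $\angle q_ip_ir_1+\angle q_ip_ir_2\ge\pi$. Using $\limsup_i(a_i+b_i)\le\limsup_i a_i+\limsup_i b_i$,
\[
\pi=\angle qpr_1+\angle qpr_2\ge\limsup_i\angle q_ip_ir_1+\limsup_i\angle q_ip_ir_2\ge\limsup_i(\angle q_ip_ir_1+\angle q_ip_ir_2)\ge\pi ,
\]
again forcing equalities and $\angle qpr_j=\limsup_i\angle q_ip_ir_j$, after which the analogous subsequence argument (now using the $\ge\pi$ relation) upgrades to a limit.

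The step requiring care, and the main obstacle, is exactly this upgrade from one-sided control to a two-sided limit: semi-continuity alone only pins down $\liminf$ or $\limsup$, and it is the rigidity of the angle-sum relation that converts the chain of inequalities into equalities and then, via the subsequence argument, into convergence. This also exposes why the extra hypothesis $\angle qpr_1+\angle qpr_2=\pi$ is genuinely needed in the CBA case: without it, Lemma \ref{lem1.6} gives only a lower bound on the limiting sum, and upper semi-continuity by itself cannot prevent the angles from dropping in the limit, so the squeeze fails. I would also double-check throughout that semi-continuity is applied with the correct geodesics — the given $[q_ip_i]$ together with the sub-arcs of $[r_1r_2]$ — so that the convergence hypotheses of that statement are literally met.
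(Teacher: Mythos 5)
Your proof is correct and follows exactly the route the paper intends: it sketches no separate argument for Lemma \ref{lem1.7}, stating only that the one-sided semi-continuity of angles ``together with (\ref{eqn1.5}) implies the following continuity,'' which is precisely your squeeze of $\liminf$/$\limsup$ against the angle-sum relations of Lemma \ref{lem1.6}. Your filled-in details — including the subsequence upgrade to a genuine limit and the observation of why $\angle qpr_1+\angle qpr_2=\pi$ must be assumed in the CBA case, where (\ref{eqn1.5}) gives only an inequality — are all sound.
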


We now end this section with some conceptions which only apply to CBB-type Alexandrov spaces (\cite{BGP}).
A CBB-type Alexandrov space $X$ has the conception of dimension, and
the space of directions $\Sigma_pX$ at any $p\in X$ is an Alexandrov space with curvature $\geq 1$
and has a dimension one less than $X$. If $\Sigma_pX$ is isometric to a unit sphere, we say that $p$ is a Riemannian point.

As a result,
by induction we can define $p$ to be a boundary or interior point if $\Sigma_pX$ contains boundary or no boundary point respectively. Usually, we denote by $X^\circ$ and $\partial X$ the set of interior points  and boundary points respectively. Note that $\partial X$ may be not empty even if $X$ is complete.

As another result, for any $p\in X$, we can define the tangent cone $C_pX$, which is a metric cone over $\Sigma_pX$.
$C_pX$ plays an important role in studying CBB-type Alexandrov spaces because of:

\begin{lemma}[\cite{BGP}]\label{lem1.8}
	Let $X$ be a complete CBB-type Alexandrov space.
	Then with base point $p\in X$, $(\lambda X, p)$  converge in the Gromov-Hausdorff sense to $C_pX$ as $\lambda\to+\infty$.
\end{lemma}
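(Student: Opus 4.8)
The plan is to exhibit, for each large $\lambda$, an explicit Gromov--Hausdorff approximation between a ball in the rescaled space $\lambda X$ and the corresponding ball in $C_pX$, and to show that the approximation error tends to $0$ as $\lambda\to+\infty$. Recall that $C_pX$ consists of pairs $(\xi,t)$ with $\xi\in\Sigma_pX$ and $t\geq0$, equipped with the cone metric $|(\xi,s)(\eta,t)|^2=s^2+t^2-2st\cos(\min\{|\xi\eta|,\pi\})$ (the Euclidean law of cosines), and that $\Sigma_pX$ is the completion, under the angle metric, of the set of directions $\uparrow_p^x$ of minimal geodesics $[px]$. First I would fix $R>0$ and define $\Phi_\lambda$ on the ball $\bar B_p(R)\subset\lambda X$ by sending $p$ to the cone vertex $o$ and, for $x\neq p$, choosing a minimal geodesic $[px]$ and setting $\Phi_\lambda(x)=(\uparrow_p^x,\,|px|_{\lambda X})$, where $|px|_{\lambda X}=\lambda|px|_X$ is the rescaled radius.

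The core of the argument is the distance estimate. For $x,y\in\bar B_p(R)\subset\lambda X$, write $a=|px|_X$, $b=|py|_X$; then $a,b\leq R/\lambda$, so $x,y\to p$ as $\lambda\to+\infty$. Since the angle-metric distance in $\Sigma_pX$ between $\uparrow_p^x$ and $\uparrow_p^y$ is exactly $\angle xpy$, the cone metric gives $|\Phi_\lambda(x)\Phi_\lambda(y)|^2=\lambda^2\bigl(a^2+b^2-2ab\cos\angle xpy\bigr)$, whereas by the very definition of the Euclidean comparison angle $|xy|_{\lambda X}^2=\lambda^2|xy|_X^2=\lambda^2\bigl(a^2+b^2-2ab\cos\tilde\angle_0 xpy\bigr)$. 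Hence the distortion is governed entirely by $\bigl|\cos\tilde\angle_0 xpy-\cos\angle xpy\bigr|$. Here I would invoke the monotonicity recalled around (\ref{eqn1.2}): as $a,b\to0$ the comparison angle $\tilde\angle_{k}xpy$ increases to $\angle xpy$, and $\tilde\angle_0 xpy$ shares the same limit as $\tilde\angle_k xpy$. Thus $\tilde\angle_0 xpy\to\angle xpy$, so $\bigl||xy|_{\lambda X}-|\Phi_\lambda(x)\Phi_\lambda(y)|\bigr|\to0$.

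For approximate surjectivity I would use that the geodesic directions $\uparrow_p^x$ are dense in $\Sigma_pX$: given any $(\xi,t)$ in the $R$-ball of $C_pX$, choose a geodesic direction $\uparrow_p^{x_0}$ close to $\xi$ and take the point on $[px_0]$ at rescaled distance $t$; this lies in $\bar B_p(R)\subset\lambda X$ once $\lambda$ is large enough that $t/\lambda$ does not exceed $|px_0|_X$, and its image under $\Phi_\lambda$ is close to $(\xi,t)$. A diagonal argument over a countable dense set of directions then turns $\Phi_\lambda$ into an $\varepsilon(\lambda)$-Gromov--Hausdorff approximation with $\varepsilon(\lambda)\to0$. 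As this holds for every $R$, pointed convergence $(\lambda X,p)\to C_pX$ follows.

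The step I expect to be the main obstacle is establishing that the convergence $\tilde\angle_0 xpy\to\angle xpy$ is \emph{uniform} over all pairs $x,y$ in the shrinking ball, rather than merely pointwise: the angle $\angle xpy$ is a supremum of the monotone family of comparison angles, and a priori distinct configurations could approach this supremum at different rates. I would address this either by a compactness argument on the fixed compact ball $\bar B_p(R)\subset\lambda X$ together with the semicontinuity of angles (Lemma~\ref{lem1.7}), or, more cleanly, by the scaling self-similarity of the blow-up limit: since $(\mu\lambda X,p)$ and $(\lambda X,p)$ have the same limit for every $\mu>0$, any subsequential Gromov--Hausdorff limit $Y$ (which exists by Gromov precompactness, as $\lambda X$ has curvature $\geq\min\{0,k\}$ for $\lambda\geq1$) must satisfy $\mu Y\cong Y$, forcing $Y$ to be a metric cone with vertex $\lim p$; the identification of its link with $\Sigma_pX$ then follows from the angle estimate above.
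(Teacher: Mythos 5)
The paper offers no proof of Lemma~\ref{lem1.8} (it is quoted from [BGP]), so your proposal must be measured against the standard argument. Your skeleton --- the approximation map $\Phi_\lambda(x)=(\uparrow_p^x,\lambda|px|)$, the reduction of the distortion to $|\cos\tilde\angle_0 xpy-\cos\angle xpy|$, and density of geodesic directions for surjectivity --- is indeed the standard one, and your diagnosis of the crux (uniformity of $\tilde\angle_0 xpy\to\angle xpy$ over \emph{all} pairs in the shrinking ball, not just points sliding along two fixed geodesics) is exactly right. But neither of your proposed remedies closes this gap. For (a): the semicontinuity in Lemma~\ref{lem1.7} concerns geodesics $[q_ip_i]$ converging to a fixed nondegenerate configuration $[qp]$, $[r_1r_2]$ inside $X$; in the blow-up all configurations collapse to the single point $p$, so there is no limit configuration to which it applies, and ``compactness of $\bar B_p(R)\subset\lambda X$'' refers to a different metric space for each $\lambda$, so no subsequence extraction within a fixed space is available. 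For (b): first, Gromov precompactness of the rescaled balls needs a uniform dimension bound (true here, since CBB plus local compactness gives finite local dimension, but you must say it); second, pointed self-similarity $\mu Y\cong Y$ of a subsequential limit does not by itself force $Y$ to be a metric cone --- that deduction requires a rigidity argument you do not supply; and third, your closing step ``the identification of its link with $\Sigma_pX$ then follows from the angle estimate above'' reuses precisely the pointwise estimate whose lack of uniformity you set out to circumvent, so route (b) is circular where it matters.

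The standard repair is a finite net of geodesics, and it uses only the monotonicity you already invoked. Since $\Sigma_pX$ is compact and geodesic directions are dense, fix geodesics $[pq_1],\dots,[pq_N]$ whose directions form an $\varepsilon$-net in $\Sigma_pX$. For an arbitrary $x$ with $|px|=a$ small, choose $q_i$ with $\angle xpq_i<\varepsilon$ and let $x'\in[pq_i]$ with $|px'|=a$; then $\tilde\angle_0 xpx'\le\angle xpx'=\angle xpq_i<\varepsilon$, so $|xx'|\le 2a\sin(\varepsilon/2)$, i.e.\ after rescaling every point of the ball is $R\varepsilon$-close to the net geodesics. On the finitely many pairs $([pq_i],[pq_j])$, the monotonicity of $(s,t)\mapsto\tilde\angle_0$ in both parameters makes the convergence $\tilde\angle_0\to\angle q_ipq_j$ uniform in the scale, and this upgrades your pointwise statement to a genuine $\varepsilon(\lambda)$-Gromov--Hausdorff approximation, with surjectivity handled as you indicate. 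Alternatively, within this paper you could simply invoke (\ref{lem2.6}.1) (quoted from [BGP]), which is exactly the uniform statement $\angle xpy-\tilde\angle_\kappa xpy\to0$ as $x,y\to p$; combined with the fact that $\tilde\angle_0$ and $\tilde\angle_\kappa$ differ by $o(1)$ at small scale, it closes your distortion estimate in one line --- though if you want a self-contained proof of Lemma~\ref{lem1.8} you should note that the proof of that fact in [BGP] does not itself pass through cone convergence.
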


In this paper, $\lambda X$ denotes $X$ endowed with the metric $\lambda\cdot d$, where $d$ is the original metric on $X$.


\section{Proof of Theorem A for curvature $\geq \ (\leq)\ k$ }

In this section, we will show the former part of Theorem A,
i.e. the sufficiency and necessity of the condition for curvature $\geq k$ or $\leq k$ in Theorem A.
By Theorems \ref{thm1.3} and 1.3$'$, it suffices to verify the sufficiency, and the verification shall be proceeded
according to:

Case 1: For curvature $\geq k$ around a CBB-type point $x\in X$;

Case 2: For curvature $\geq k$ around a CBA-type point $x\in X$;

Case 3: For curvature $\leq k$ around a CBA-type point $x\in X$;

Case 4: For curvature $\leq k$ around a CBB-type point $x\in X$.

\vskip2mm

For the convenience of readers, we first give a rough idea of our proof.
For instance, in Case 1, if the curvature of $X$ is not $\geq k$ around the CBB-type point $x$,
then by Theorem \ref{thm1.3} there must be a triangle $\triangle pqr$ containing a `bad' angle, say $\angle qpr$,  i.e. $\angle qpr<\tilde\angle_k qpr$.
A key observation is that such a triangle can be cut into two (smaller) triangles
and at least one of them still contains a `bad' angle (cf. \cite{Wa}), which is guaranteed by the lemma right below.
By repeating such a cutting operation finite times, we will get a triangle which contradicts the condition for curvature $\geq k$ in Theorem A.

In this paper, for a given $\triangle qpr$, its comparison triangle is defined to be a
$\triangle\tilde p\tilde q\tilde r\subset\mathbb S^2_k$ with
$|\tilde p\tilde q|=|pq|$, $|\tilde p\tilde r|=|pr|$ and $|\tilde q\tilde r|=|qr|$.

\begin{lemma}[cf. \cite{Wa}]\label{lem2.1}
Let $x$ be a {\text CBB}-point in  a complete Alexandrov space,
and let $k$ be a real number. Then there is a neighborhood $U_x$ of $x$ such that,
for any $\triangle qr_1r_2\subset U_x$ and its comparison triangle $\triangle\tilde q\tilde r_1\tilde r_2\subset\mathbb S^2_k$,
if $|qs|-|\tilde q\tilde s|$ with $s\in[r_1r_2], \tilde s\in[\tilde r_1\tilde r_2]$ and $|r_is|=|\tilde r_i\tilde s|$
attains a negative minimum at $s_0\in [r_1r_2]^\circ$, then any $[qs_0]$ satisfies
\begin{equation}\label{eqn2.1}
\angle qs_0r_1<\tilde\angle_k qs_0r_1 \ \text{ or }\ \angle qs_0r_2<\tilde\angle_k qs_0r_2;
\end{equation}
in particular,
\begin{equation}\label{eqn2.5}
\text{if $|r_is_0|\ll |r_iq|$ for $i=1$ or $2$, then $\angle qs_0r_i<\tilde \angle_k qs_0r_i$.}
\end{equation}
\end{lemma}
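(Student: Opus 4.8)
The plan is to read the angles at $s_0$ off the first variation formula (Lemma \ref{lem1.4}) and to feed the negativity $|qs_0|<|\tilde q\tilde s_0|$ into the comparison geometry of $\mathbb S^2_k$ (Alexandrov's Lemma and the monotonicity of comparison angles). Shrink $U_x$ so that Lemmas \ref{lem1.4} and \ref{lem1.6} apply and all comparison triangles below are non-degenerate. Write $\gamma_i$ for the angle of the comparison triangle $\triangle\tilde q\tilde r_1\tilde r_2$ at $\tilde s_0$ between $[\tilde s_0\tilde q]$ and $[\tilde s_0\tilde r_i]$; since $\tilde s_0\in[\tilde r_1\tilde r_2]$ we have $\gamma_1+\gamma_2=\pi$.

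I first prove the dichotomy (\ref{eqn2.1}); this needs only the negativity, not the minimality. Fix any minimal geodesic $[qs_0]$. As $s_0\in[r_1r_2]^\circ$ and $X$ is of CBB-type near $x$, Lemma \ref{lem1.6} gives $\angle qs_0r_1+\angle qs_0r_2=\pi$. On the other hand, glue the comparison triangles of $\triangle qs_0r_1$ and $\triangle qs_0r_2$ along their common side of length $|qs_0|$, with $r_1,r_2$ on opposite sides; Alexandrov's Lemma in $\mathbb S^2_k$ then guarantees that the sum of their angles at the $s_0$-vertex is $>\pi$ as soon as $|qs_0|$ is smaller than the cevian $|\tilde q\tilde s_0|$ of the straightened configuration $\triangle\tilde q\tilde r_1\tilde r_2$. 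Since $s_0$ realizes a \emph{negative} minimum, $|qs_0|<|\tilde q\tilde s_0|$, so $\tilde\angle_kqs_0r_1+\tilde\angle_kqs_0r_2>\pi$. Subtracting the identity from this strict inequality,
\[
(\tilde\angle_kqs_0r_1-\angle qs_0r_1)+(\tilde\angle_kqs_0r_2-\angle qs_0r_2)>0,
\]
so at least one summand is positive, which is precisely (\ref{eqn2.1}).

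To obtain (\ref{eqn2.5}) I now use that $s_0$ is a minimum. Letting $s\to s_0$ along $[s_0r_i]$ (so $\tilde s\to\tilde s_0$ along $[\tilde s_0\tilde r_i]$) and applying Lemma \ref{lem1.4} together with the first variation formula on $\mathbb S^2_k$,
\[
|qs|-|\tilde q\tilde s|=\big(|qs_0|-|\tilde q\tilde s_0|\big)+|s_0s|\big(\cos\gamma_i-\cos|\!\uparrow_{s_0}^{r_i}\Uparrow_{s_0}^q\!|\big)+o(|s_0s|).
\]
Minimality forces $\cos\gamma_i\ge\cos|\!\uparrow_{s_0}^{r_i}\Uparrow_{s_0}^q\!|$, i.e. $\gamma_i\le|\!\uparrow_{s_0}^{r_i}\Uparrow_{s_0}^q\!|\le\angle qs_0r_i$ for $i=1,2$ and for every $[qs_0]$. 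Adding the two and using $\gamma_1+\gamma_2=\pi=\angle qs_0r_1+\angle qs_0r_2$ forces equality throughout; hence $\angle qs_0r_i=\gamma_i$ for \emph{every} minimal geodesic $[qs_0]$.

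Finally assume $|r_1s_0|\ll|r_1q|$ (the case $i=2$ is symmetric). In $\triangle\tilde q\tilde r_1\tilde s_0$ the side $|\tilde r_1\tilde s_0|=|r_1s_0|$ is tiny and lies opposite the $\tilde q$-vertex, while the other two sides are of size $\approx|qr_1|$; by the Law of Sine in $\mathbb S^2_k$ the angle at $\tilde q$ is then $\ll1$, in particular acute, and it stays acute while the third side ranges over $[\,|qs_0|,\,|\tilde q\tilde s_0|\,]$. By the Law of Cosine in $\mathbb S^2_k$, in a triangle with the side $|r_1s_0|$ and the opposite side $|qr_1|$ held fixed, the angle at the $s_0$-vertex strictly decreases as the remaining side increases, provided the angle at the $q$-vertex stays acute; enlarging that side from $|qs_0|$ to $|\tilde q\tilde s_0|>|qs_0|$ thus yields $\gamma_1<\tilde\angle_kqs_0r_1$. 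Combined with $\angle qs_0r_1=\gamma_1$ from the previous step, this is (\ref{eqn2.5}). I expect the comparison-geometry input to be the crux: verifying Alexandrov's Lemma and the angle-monotonicity uniformly for every sign of $k$ (routine for $k=0$, but requiring the spherical and hyperbolic Laws of Cosine in general), and squeezing the first variation formula, which records only the \emph{minimal} angle $|\!\uparrow_{s_0}^{r_i}\Uparrow_{s_0}^q\!|$ over all $[qs_0]$, against Lemma \ref{lem1.6} so as to control every geodesic at once.
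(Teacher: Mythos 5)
Your proposal is correct, and it splits the argument differently from the paper, so a comparison is worthwhile. The paper drives both conclusions with a single mechanism: the first variation formula (Lemma \ref{lem1.4}) at the interior minimum gives $\angle qs_0r_i\geq\angle\tilde q\tilde s_0\tilde r_i$ for every $[qs_0]$, the two angle sums being $\pi$ (Lemma \ref{lem1.6} in $X$, trivially in $\mathbb S^2_k$) upgrade this to the equalities $\angle qs_0r_i=\angle\tilde q\tilde s_0\tilde r_i$ --- exactly your middle step --- and then the point $\tilde q'\in[\tilde q\tilde s_0]^\circ$ with $|\tilde q'\tilde s_0|=|qs_0|$ satisfies $|\tilde q'\tilde r_i|<|\tilde q\tilde r_i|$ for at least one $i$ (and for the specific $i$ when $|r_is_0|\ll|r_iq|$), whence $\angle\tilde q\tilde s_0\tilde r_i<\tilde\angle_k qs_0r_i$, giving (\ref{eqn2.1}) and (\ref{eqn2.5}) simultaneously. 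You instead obtain the dichotomy (\ref{eqn2.1}) directly from Alexandrov's lemma: $|qs_0|<|\tilde q\tilde s_0|$ forces $\tilde\angle_kqs_0r_1+\tilde\angle_kqs_0r_2>\pi=\angle qs_0r_1+\angle qs_0r_2$ (the strictness being correct, since a glued configuration with angle sum exactly $\pi$ at the hinge is a genuine model triangle and the model triangle over given side lengths is unique). This is a genuine, if small, gain: it isolates the fact that (\ref{eqn2.1}) needs only the negativity $|qs_0|<|\tilde q\tilde s_0|$ and not the minimality, whereas the paper uses minimality for both conclusions. For (\ref{eqn2.5}) your argument coincides with the paper's, except that the paper's synthetic shortening via $\tilde q'$ is replaced by the law-of-cosines monotonicity (the angle at the $s_0$-vertex decreases in the third side as long as the $q$-angle stays acute, which holds here because $|r_1s_0|$ is the strictly smallest side for every third side in $[\,|qs_0|,|\tilde q\tilde s_0|\,]$); the two are equivalent, and your version makes explicit the acuteness proviso that the paper's one-line claim leaves implicit. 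One caveat on economy: the equalities $\angle qs_0r_i=\angle\tilde q\tilde s_0\tilde r_i$ produced by the first-variation step are recorded in the paper as (\ref{eqn2.3}) and reused later (e.g.\ in (\ref{eqn2.11})), so the paper's unified route earns its keep beyond this lemma; since your proof establishes the same equalities on the way to (\ref{eqn2.5}), nothing downstream is lost.
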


Note that if $k$ is positive in the lemma, then the larger $k$ is,
the smaller $U_x$ should be to guarantee that $\triangle qr_1r_2$ has a comparison triangle in $\mathbb S^2_k$.

\begin{proof}
Since $x$ is a {\text CBB}-point, by Lemmas \ref{lem1.4} and \ref{lem1.6}, there is a neighborhood $U_x$ of $x$ such that
we can apply (\ref{eqn1.4}) and (\ref{eqn1.5}) on it.
For any $[qs_0]$, by (\ref{eqn1.4}), the negative minimum of $|qs|-|\tilde q\tilde s|$ at $s_0$ implies
$$
\angle qs_0r_i\geq\angle \tilde q\tilde s_0\tilde r_i,\ \ i=1,2.
$$
It then has to hold that
\begin{equation}\label{eqn2.3}
\angle qs_0r_i=\angle \tilde q\tilde s_0\tilde r_i,\ \ i=1,2	\end{equation}
because, by (\ref{eqn1.5}), we have that
\begin{equation}\label{eqn2.4}	
\angle qs_0r_1+\angle qs_0r_2=\pi.
\end{equation}
On the other hand, since $|qs_0|<|\tilde q\tilde s_0|$, there is $\tilde q'\in[\tilde q\tilde s_0]^\circ$
such that $|\tilde q'\tilde s_0|=|qs_0|$. It is clear that at least one of
`$|\tilde q'\tilde r_i|<|\tilde q\tilde r_i|$' holds.
It follows that at least one of
`$\angle \tilde q\tilde s_0\tilde r_i<\tilde \angle_k q s_0r_i$'
holds; especially, if $|r_is_0|\ll |r_iq|$ for $i=1$ or $2$, then $\angle \tilde q\tilde s_0\tilde r_i<\tilde \angle_k qs_0r_i$.
As a result, the lemma follows.
\end{proof}

\begin{remark}\label{rem2.2}{\rm
(2.2.1) In Lemma \ref{lem2.1}, if $|qs|-|\tilde q\tilde s|$ attains a positive maximum at $s_0\in [r_1r_2]^\circ$, then
	 	similar to (\ref{eqn2.1}) either $\angle qs_0r_2>\tilde\angle_k qs_0r_2$
	 	for the $[qs_0]$ with $|\uparrow_{s_0}^q\uparrow_{s_0}^{r_1}|=|\Uparrow_{s_0}^q\uparrow_{s_0}^{r_1}|$,
	 	or $\angle qs_0r_1>\tilde\angle_k qs_0r_1$
	 	for the $[qs_0]$ with $|\uparrow_{s_0}^q\uparrow_{s_0}^{r_2}|=|\Uparrow_{s_0}^q\uparrow_{s_0}^{r_2}|$.
	 	And similar to (\ref{eqn2.5}), if $\angle \tilde r_i\tilde q\tilde s_0<\frac\pi2$ for $i=1$ or $2$, then
	 	$\angle \tilde q\tilde s_0\tilde r_i>\tilde \angle_k qs_0r_i$ and thus the corresponding $\angle qs_0r_i>\tilde\angle_k qs_0r_i$.

\vskip1mm
	 	
\noindent(2.2.2) In Lemma \ref{lem2.1}, if $x$ is a {\text CBA}-point, the lemma is not true
	 	unless $\angle qs_0r_1+\angle qs_0r_2=\pi$ (see (\ref{eqn2.4}); note that it may occur that $\angle qs_0r_1+\angle qs_0r_2>\pi$ by (\ref{eqn1.5})).
	
\vskip1mm
 	
\noindent (2.2.3) In Lemma \ref{lem2.1}, if $x$ is a {\text CBA}-point, and if
	 	$|qs|-|\tilde q\tilde s|$ attains a positive maximum at $s_0\in [r_1r_2]^\circ$, then similar to (\ref{eqn2.1})
	 	we have that $\angle qs_0r_i>\tilde\angle_k qs_0r_i$ for $i=1$ or $2$.
	 	(Here, there is a unique minimal geodesic between $q$ and $s_0$ because of the CBA-property.)
	 	And similar to (\ref{eqn2.5}), if $\angle \tilde r_i\tilde q\tilde s_0<\frac\pi2$ for $i=1$ or $2$, then
	 	$\angle \tilde q\tilde s_0\tilde r_i>\tilde \angle_k qs_0r_i$ and thus  $\angle qs_0r_i>\tilde\angle_k qs_0r_i$.}
\end{remark}

Now, according to the four cases listed in the beginning of this section, we begin to prove the former part of Theorem A case by case.

\begin{proof}[Proof for Case 1]\

In this case, around a CBB-type point $x\in X$, we will prove:

\newcounter{eqn1}
\noindent \stepcounter{equation} \setcounter{eqn1}{\value{equation}} (\arabic{section}.\arabic{equation}\label{eqn2.5.1})\ \
{\it $X$ is of curvature $\geq k$ if there is a neighborhood $U_x$ of $x$
	such that,
	\begin{equation}\label{eqn2.6}
	\text{ for any $q\in U_x$ and $[r_1r_2]\subset U_x$, if $p\in [r_1r_2]^\circ$
		satisfies $|qp|=|q[r_1r_2]|$, then
		$\tilde\angle_kqpr_i\leq \frac\pi2$.}
	\end{equation}  }
\hskip5mm If $X$ is not of curvature $\geq k$ around $x$, then we claim that
there is a triangle $\triangle qpr\subset U_x$ such that
\begin{equation}\label{eqn2.7}
	\text{$|qs|=|q[pr]|$ for some $s\in [pr]^\circ$, and $|qs|<|\tilde q\tilde s|$ for $\tilde s\in[\tilde p\tilde r]$  with $|\tilde s\tilde p|=|sp|$,}
\end{equation}
where $[\tilde p\tilde r]$ belongs to the comparison triangle $\triangle \tilde q\tilde p\tilde r\subset\mathbb S^2_k$ of $\triangle qpr$.
Nevertheless, the $\triangle qpr$ contradicts (\ref{eqn2.6}). In fact, note that either $\angle\tilde q\tilde s\tilde p\geq \frac\pi2$
or $\angle\tilde q\tilde s\tilde r\geq \frac\pi2$, say $\angle\tilde q\tilde s\tilde p\geq \frac\pi2$.
Then by applying (\ref{eqn2.6}) on $\triangle qpr$, it has to hold that $|qs|\geq|\tilde q\tilde s|$, a contradiction.

We now need only  to verify the claim, i.e. to show the existence of the desired triangle. By Theorem \ref{thm1.3},
if $X$ is not of curvature $\geq k$ around $x$,
there exists a sufficiently small triangle $\triangle pqr\subset U_x$
which contains a `bad' angle, say $\angle qpr$, i.e.
$\angle qpr<\tilde\angle_k qpr.$

First of all, observe that the badness of $\angle qpr$ implies that there is $\bar s\in [pr]\setminus\{p\}$ and $\bar t\in [pq]\setminus\{p\}$ such that,
for all $s\in[p\bar s]^\circ$ and $t\in [p\bar t]^\circ $,
\begin{equation}\label{eqn2.8}
|\bar ts|<|\tilde{\bar t}\tilde s|,  |\bar st|<|\tilde{\bar s}\tilde t|, |\bar s\bar t|=|\tilde{\bar s}\tilde{\bar t}|  \text{ and } \angle \bar tp\bar s-\tilde\angle_k\bar tp\bar s=\angle qpr-\tilde\angle_k qpr,
\end{equation}
where $\tilde{\bar s},\tilde s,\tilde{\bar t},\tilde t$ belong to the comparison triangle
$\triangle\tilde p\tilde q\tilde r\subset\Bbb S^2_k$ of $\triangle qpr$ and correspond to $\bar s,s,\bar t,t$ respectively with
$\tilde s\in [\tilde p\tilde r]$ and $|\tilde s\tilde p|=|sp|$, etc.
In fact, by the badness of $\angle qpr$ and Lemma \ref{lem1.4},
there is $s_{1}\in [pr]\setminus\{p\}$ such that
\begin{equation}\label{C2.4}
|qs_{1}|=|\tilde{q}\tilde s_{1}| \text{ and } |qs|<|\tilde{q}\tilde s|
\end{equation}
for all $s\in[ps_{1}]^\circ$ and $\tilde s_{1},\tilde s\in [\tilde{p}\tilde{r}]^\circ$ with $|\tilde s_{1}\tilde{p}|=|s_{1}p|$
and $|\tilde s\tilde{p}|=|sp|$. Note that $\triangle\tilde{q}\tilde{p}\tilde{s}_{1}$  is a comparison triangle of $\triangle qps_{1}$ with
$$\angle qps_{1}-\tilde\angle_k  qps_{1}=\angle qpr-\tilde\angle_k  qpr,$$
so $\angle qps_{1}$ is still `bad' in $\triangle qps_{1}$. And similarly, there is $t_{1}\in [pq]\setminus\{p\}$ such that
\begin{equation}\label{C2.5}
|s_{1}t_{1}|=|\tilde s_{1}\tilde t_{1}| \text{ and } |s_{1}t|<|\tilde s_1\tilde t|
\end{equation}
for all $t\in[pt_{1}]^\circ$ and $\tilde t_{1},\tilde t\in[\tilde{p}\tilde{q}]$ with $|\tilde t_{1}\tilde{ p}|=|t_{1}p|$ and $|\tilde t\tilde{p}|=|tp|$.
Furthermore, we can locate an $s_{2}\in [ps_{1}]\setminus\{p\}$ similar to $s_{1}$ and a
$t_{2}\in [pt_{1}]\setminus\{p\}$ similar to $t_{1}$. Then we can get a sequence of $\{s_{j}\}_{j=1}^\infty$ and $\{t_{j}\}_{j=1}^\infty$
(it may occur that $s_{j}=s_{j_0}$ and $t_{j}=t_{j_0}$ for all $j\geq j_0$) such that
\begin{equation}\label{C2.6}
\angle s_{j}pt_{j-1}-\tilde\angle_ks_{j}pt_{j-1}=
\angle s_{j}pt_{j}-\tilde\angle_k s_{j}pt_{j}=\angle qpr-\tilde\angle_k  qpr,
\end{equation}
which, by (\ref{eqn1.2}), implies that
$$s_{j}\to \bar s\in [pr]\setminus\{p\} \text{ and } t_{j}\to \bar t\in [pq]\setminus\{p\}
\text{ as } j\to\infty.$$
By the corresponding (\ref{C2.4}) and (\ref{C2.5}) for each $j$ and (\ref{C2.6}), we can conclude that
$\bar s$ and $\bar t$ must satisfy the two equalities in (\ref{eqn2.8}). Then up to repeating this process on $\triangle \bar tp\bar s$,
we can assume that $\bar s$ and $\bar t$ also satisfy the two inequalities in (\ref{eqn2.8}). Namely, we have found the desired $\bar s$ and $\bar t$.

Note that (\ref{eqn2.8}) implies that
\begin{equation}\label{C2.7}
\tilde\angle_k \bar t\bar sp\geq |\uparrow_{\bar s}^p\Uparrow_{\bar s}^{\bar t}| \text{ and }
\tilde\angle_k \bar s\bar tp\geq |\uparrow_{\bar t}^p\Uparrow_{\bar t}^{\bar s}|.
\end{equation}
On the other hand, note that $\triangle pqr$ is sufficiently small, so is any $\triangle\bar t p\bar s$;
and thus at least one of $\tilde\angle_k \bar t\bar sp$ and $\tilde\angle_k \bar s\bar tp$ is an acute angle.
This together with (\ref{C2.7}) implies that there is a triangle $\triangle \bar tp\bar s$ such that
\begin{equation}\label{C2.8}
\text{at least one of $\angle \bar t\bar sp$ and $\angle \bar s\bar tp$ is acute, say $\angle \bar t\bar sp$.}
\end{equation}
In addition, if $\angle qpr<\frac\pi2$, then by Lemma \ref{lem1.4} there is $s\in [p\bar s]^\circ$ such that
$|\bar ts|=|\bar t[p\bar s]|$, i.e. $\triangle \bar tp\bar s$ is our desired triangle.

Hence, we need only to show that it can be assumed that $\angle qpr<\frac\pi2$.
Since $\angle qpr$ is a `bad' angle,
for $s\in[pr]^\circ$ close to $p$ and $\tilde s\in[\tilde p\tilde r]^\circ$ with $|\tilde s\tilde p|=|sp|$
we have that $|qs|<|\tilde q\tilde s|$ (by Lemma \ref{lem1.4}). Then there is $r'\in [pr]^\circ$
and $\tilde r'\in[\tilde p\tilde r]^\circ$ with $|pr'|=|\tilde p\tilde r'|$ such that
\begin{equation}\label{eqn2.9}
|qr'|-|\tilde q\tilde r'|=\min\limits_{s\in[pr],
\tilde s\in[\tilde p\tilde r], |ps|=|\tilde p\tilde s|}\{|qs|-|\tilde q\tilde s|\}<0,
\end{equation}
so via Lemma \ref{lem2.1} we can conclude that
\begin{equation}\label{eqn2.10}
\text{ $\angle qr'p$ is a `bad' angle in $\triangle qr'p$, or $\angle qr'r$ is a `bad' angle in $\triangle qr'r$}.
\end{equation}
And in this situation, (\ref{eqn2.3}) means that
\begin{equation}\label{eqn2.11}
\angle qr'p=\angle\tilde q\tilde r'\tilde p \text{ and } \angle qr'r=\angle\tilde q\tilde r'\tilde r.
\end{equation}
Note that if $\angle qpr\geq\frac\pi2$, then it follows from the badness of $\angle qpr$ that $\angle \tilde p\tilde q\tilde r\ (=\tilde\angle_k qpr)>\frac\pi2$.
And note that $\triangle \tilde p\tilde q\tilde r$ is sufficiently small in $\mathbb S^2_k$, so it is easy to see that
\begin{equation}\label{eqn2.12}
\angle \tilde q\tilde r'\tilde p<\frac\pi2 \ \text{ and }\ \angle \tilde q\tilde r'\tilde r>\frac\pi2.
\end{equation}
By (\ref{eqn2.10})-(\ref{eqn2.12}), $\triangle qr'p$ contains an acute `bad' angle if the angle $\angle qr'p$ is `bad'; otherwise,
we can repeat such a cutting oparation on $\triangle qr'r$. For convenience, we also let $\triangle qpr$ denote
the $\triangle qr'r$. Note that up to repeating such a cutting oparation finite times,
we can assume that $|pr|\ll |pq|$, so plus (\ref{eqn2.5}) we can conclude that $\angle qr'p$ is an acute `bad' angle in $\triangle qr'p$.
This means that we can assume that $\angle qpr<\frac\pi2$.
\end{proof}

\begin{proof}[Proof for Case 2]\
	
In this case, we shall prove (\ref{eqn2.5.1}.\arabic{eqn1}) around a CBA-type point $x\in X$.
Compared with the proof for Case 1, the only difference and difficulty here
is why (\ref{eqn2.10}) holds. Note that Lemma \ref{lem2.1} fails to work here (see (2.2.2) in Remark \ref{rem2.2}).
Namely, the proof for Case 2 will be done if one can show that: {\it for a CBA-type point $x$,
if the $U_x$ in Lemma \ref{lem2.1} satisfies (\ref{eqn2.6}) additionally,
then the conclusion of Lemma \ref{lem2.1} still holds}.
By (2.2.2), it suffices to show that $\angle qs_0r_1+\angle qs_0r_2=\pi$,
i.e., the possible case `$\angle qs_0r_1+\angle qs_0r_2>\pi$' does not occur at all here.

Let $q'\in [qs_0]$ be sufficiently close to $s_0$, and let $s'\in [r_1r_2]$ such that $|q's'|=|q'[r_1r_2]|$
(for $[qs_0]$ and $[r_1r_2]$ refer to the proof of Lemma \ref{lem2.1}).
Note that $s'$ is close to $s_0$, so we can assume that $s'$ lies in $[r_1r_2]^\circ$.
Then for $\tilde s'\in[\tilde r_1\tilde r_2]$ ($\subset\triangle \tilde q\tilde r_1\tilde r_2\subset \mathbb S^2_k$)
with $|\tilde r_1\tilde s'|=|r_1s'|$, there is $\tilde q'\in\mathbb S_k^2$ such that
\begin{equation}\label{eqn2.17}
|\tilde r_1\tilde q'|+|\tilde q'\tilde r_2|=|r_1q'|+|q'r_2|
\end{equation}
and
\begin{equation}\label{eqn2.18}
\angle\tilde q'\tilde s'\tilde r_1=\angle\tilde q'\tilde s'\tilde r_2=\frac\pi2.
\end{equation}
By Lemma \ref{lem1.4}, it is easy to see that
\begin{equation}\label{eqn2.19}
|r_1q'|+|q'r_2|=|r_1r_2|-|q's_0|\cdot (\cos\angle qs_0r_1+\cos\angle qs_0r_2)+o(|q's_0|),
\end{equation}
and that
\begin{equation}\label{eqn2.20}
|\tilde r_1\tilde q'|+|\tilde q'\tilde r_2|=|\tilde r_1\tilde r_2|-|\tilde q'\tilde s'|\cdot (\cos\angle \tilde q'\tilde s'\tilde r_1+\cos\angle \tilde q'\tilde s'\tilde r_2)+o(|\tilde q'\tilde s'|)=|r_1r_2|+o(|\tilde q'\tilde s'|).
\end{equation}
Hence, if $\angle qs_0r_1+\angle qs_0r_2>\pi$, then from (\ref{eqn2.17}), (\ref{eqn2.19}) and (\ref{eqn2.20}) we can see that
\begin{equation}\label{eqn2.21}
|q's'|\leq|q's_0|<|\tilde q'\tilde s'|.
\end{equation}
However, putting `$|q's'|=|q'[r_1r_2]|$', `$|\tilde r_i\tilde s'|=|r_is'|$', (\ref{eqn2.18}) and (\ref{eqn2.21}) together,
we can apply (\ref{eqn2.6}) to conclude that
$|r_1q'|+|q'r_2|<|\tilde r_1\tilde q'|+|\tilde q'\tilde r_2|,$
which contradicts (\ref{eqn2.17}).
\end{proof}

\begin{proof}[Proof for Case 3]\

In this case, around a CBA-type point $x\in X$, we will prove:

\vskip1mm

\newcounter{C2.16}
\noindent \stepcounter{equation} \setcounter{C2.16}{\value{equation}} (\arabic{section}.\arabic{equation})
{\it $X$ is of curvature $\leq k$ if there is a neighborhood $U_x$ of $x$
such that,
\begin{equation}\label{eqn2.25}
\text{for any $q\in U_x$ and $[r_1r_2]\subset U_x$, if $p\in [r_1r_2]^\circ$
satisfies $|qp|=|q[r_1r_2]|$, then
$\tilde\angle_kqpr_i\geq \frac\pi2$.}
\end{equation}
}
The proof is almost a copy of that for Case 1 with reversing the directions of
the corresponding inequalities. (Hint: ``$\angle qpr$ is a `bad' angle'' here means that
$\angle qpr>\tilde\angle_k qpr$. Then by Lemma \ref{lem1.4} there is $r'\in[pr]^\circ$ and
$\tilde r'\in[\tilde p\tilde r]^\circ$, which correspond to $r'$ and $\tilde r'$ satisfying (\ref{eqn2.9}), such that
\begin{equation}\label{eqn2.26}
|qr'|-|\tilde q\tilde r'|=\max\limits_{s\in[pr],
	\tilde s\in[\tilde p\tilde r], |ps|=|\tilde p\tilde s|}\{|qs|-|\tilde q\tilde s|\}>0.
\end{equation}
Furthermore, we can apply (2.2.3) to see (\ref{eqn2.10}).)
So, we only point out the main two differences here.

One is how to see (\ref{C2.8}). In Case 1, (\ref{C2.7}) is a key, but here
the corresponding (\ref{C2.7}) has inverse directions.
However, the CBA-property of $x$ here implies (\ref{C2.8}) directly when $U_x$ is small enough.

The other is how to see the acuteness of $\angle qr'p$ as in the end of
the proof for Case 1. In Case 1, a key is (\ref{eqn2.12}) which is due to `$\frac\pi2\leq\angle qpr<\tilde\angle_k qpr$';
but here the `bad' of $\angle qpr$ means that $\angle qpr>\tilde\angle_k qpr$.
However, the CBA-property of $x$ here with `$\frac\pi2\leq\angle qpr$' implies the acuteness of $\angle qr'p$
directly as long as $U_x$ is small enough.
\end{proof}

\begin{proof}[Proof for Case 4]\

In this case, we should prove (\ref{eqn2.5.1}.\arabic{C2.16}) around a CBB-type point $x\in X$.
If it is not true, then similarly, under the assumption that $X$ is not of curvature $\leq k$ around $x$,
in $U_x$ (in (\ref{eqn2.5.1}.\arabic{C2.16}))
we just need to locate a triangle contradicting (\ref{eqn2.25}), i.e.
a triangle satisfying the lemma right below.	
\end{proof}

\begin{lemma}\label{lem2.3}
Let $x\in X$ be a CBB-type point. If $X$ is not of curvature $\leq k$ around $x$,
and if a sufficiently small neighborhood $U_x$  of $x$ satisfies (\ref{eqn2.25}), then
there is a triangle $\triangle qpr\subset U_x$ such that $|qs|=|q[pr]|$ and $|qs|>|\tilde q\tilde s|$ for some
$s\in[pr]^\circ$ and $\tilde s\in[\tilde p\tilde r]^\circ$ with $|\tilde s\tilde p|=|sp|$, where
$[\tilde p\tilde r]$ belongs to the comparison triangle $\triangle \tilde q\tilde p\tilde r\subset\mathbb S^2_k$ of $\triangle pqr$.
\end{lemma}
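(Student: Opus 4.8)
The plan is to mirror the construction in Case 1 but with all comparison inequalities reversed, exploiting the CBB-property of $x$ exactly as in the proof of Lemma \ref{lem2.1} and its Remark \ref{rem2.2}. Since $X$ is assumed not of curvature $\leq k$ around $x$, Theorem \ref{thm1.3} furnishes a sufficiently small triangle $\triangle qpr\subset U_x$ with a `bad' angle, where `bad' now means $\angle qpr>\tilde\angle_k qpr$. First I would, as in Case 1, promote this single bad angle to a pair of points $\bar s\in[pr]\setminus\{p\}$ and $\bar t\in[pq]\setminus\{p\}$ realizing a \emph{positive maximum} of $|qs|-|\tilde q\tilde s|$ along the side, so that the reversed analogue of (\ref{eqn2.8}) holds; the iteration producing the sequences $\{s_j\},\{t_j\}$ converging to $\bar s,\bar t$ is identical once every ``$<$'' is replaced by ``$>$'' and the minimum by a maximum.

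Next I would establish the acuteness statement corresponding to (\ref{C2.8}). Here the CBB-property is what does the work: by (\ref{eqn1.5}) we have $\angle qs_0r_1+\angle qs_0r_2=\pi$ at any interior maximum point, so I can invoke (2.2.1) of Remark \ref{rem2.2} directly. Concretely, applying (2.2.1) to the positive maximum at $\bar s$ (playing the role of $s_0$) along the side $[pr]$ gives, for a suitable choice of $[q\bar s]$, that $\angle q\bar sp$ or $\angle q\bar sr$ is strictly bad; and the reversed (\ref{eqn2.5})-analogue in (2.2.1), namely ``$\angle \tilde r_i\tilde q\tilde s_0<\frac\pi2$ forces the corresponding inequality,'' lets me pin down \emph{which} one is bad after a finite number of cuttings. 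The cutting operation itself is the reversed version of (\ref{eqn2.9})--(\ref{eqn2.12}): starting from the badness $\angle qpr>\tilde\angle_k qpr$, I choose $r'\in[pr]^\circ$ attaining the positive maximum (\ref{eqn2.26}), and (\ref{eqn2.3}) gives $\angle qr'p=\angle\tilde q\tilde r'\tilde p$, $\angle qr'r=\angle\tilde q\tilde r'\tilde r$.

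I would then argue, exactly as in Case 1, that it may be assumed $\angle qpr<\frac\pi2$: if $\angle qpr\geq\frac\pi2$ then $\tilde\angle_kqpr>\frac\pi2$ by badness, whence the small-triangle geometry of $\mathbb S^2_k$ forces $\angle\tilde q\tilde r'\tilde p<\frac\pi2$ and $\angle\tilde q\tilde r'\tilde r>\frac\pi2$ as in (\ref{eqn2.12}); iterating the cut on $\triangle qr'r$ until $|pr|\ll|pq|$ and applying (2.2.1) makes $\angle qr'p$ an acute bad angle, so we may replace $\triangle qpr$ by a triangle with acute bad angle at $p$. With $\angle qpr<\frac\pi2$ in hand, Lemma \ref{lem1.4} (the CBB first variation formula) produces an interior $s\in[p\bar s]^\circ$ with $|qs|=|q[p\bar s]|$; since $s$ realizes a positive value of $|qs|-|\tilde q\tilde s|$ by construction, this is precisely the triangle $\triangle qps$ (renamed $\triangle qpr$) asserted by the lemma, contradicting (\ref{eqn2.25}) once we observe that either $\angle\tilde q\tilde s\tilde p>\frac\pi2$ or $\angle\tilde q\tilde s\tilde r>\frac\pi2$.

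The main obstacle I anticipate is not any single computation but the bookkeeping of directions in the CBB setting: because minimal geodesics from $s_0$ to $q$ need not be unique, the acuteness conclusion (\ref{C2.8}) must be read for the \emph{specific} geodesic $[q s_0]$ singled out in (2.2.1) (the one with $|\uparrow_{s_0}^q\uparrow_{s_0}^{r_1}|=|\Uparrow_{s_0}^q\uparrow_{s_0}^{r_1}|$), and I must check that the $s$ delivered by the first variation formula is compatible with that choice. Verifying this compatibility—that the direction realizing the minimal distance $|q[p\bar s]|$ is the same one for which (2.2.1) yields a strict inequality—is the delicate point; everything else is a sign-reversed transcription of Case 1.
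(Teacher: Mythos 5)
There is a genuine gap, and it is exactly the one the paper flags as the reason Case 4 cannot be a sign-reversed transcription of Case 1. Your plan hinges on the reversed analogue of (\ref{C2.8}): after producing $\bar s,\bar t$ with the reversed (\ref{eqn2.8}) and arranging $\angle qpr<\frac\pi2$, you assert that Lemma \ref{lem1.4} ``produces an interior $s\in[p\bar s]^\circ$'' realizing the distance from $\bar t$ to $[p\bar s]$. But acuteness at $p$ alone does not put the foot point in the interior --- the minimum could sit at $\bar s$; in Case 1 this is ruled out precisely by (\ref{C2.8}), which there follows from (\ref{C2.7}) because the reversed triangle inequalities give \emph{upper} bounds $\tilde\angle_k\bar t\bar sp\geq|\uparrow_{\bar s}^p\Uparrow_{\bar s}^{\bar t}|$ on the actual angles. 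In the present case the inequalities in (\ref{eqn2.35}) point the other way, so the corresponding (\ref{C2.7}) bounds the actual base angles from \emph{below} by comparison angles, and smallness of the triangle yields nothing acute. The paper states this obstruction explicitly and spends the bulk of its proof circumventing it: Steps 1--3 manufacture a point $\check r$ and a sequence $p_i\to\check r$ with $\angle qp_i\check r$ acute and bad, and Step 4 invokes the almost-rigidity statement (\ref{lem2.6}.2) (whose hypotheses (\ref{eqn2.50}) must themselves be verified via (\ref{eqn2.42})--(\ref{eqn2.44})) to show the base angles of $\triangle p_i\bar t_i\bar s_i$ converge to their comparison angles, whose sum tends to $\pi$ while the apex angle tends to the acute $\angle q\check rr$; only in this limit does one base angle become acute. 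None of this machinery, nor any substitute for it, appears in your proposal.

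The second issue is the non-uniqueness of minimal geodesics, which you correctly identify as delicate but then defer as a ``compatibility check.'' It cannot be finessed by tracking choices of $[qs_0]$: as the paper notes, the iterated cutting via (2.2.1) simply fails unless geodesics in $U_x$ are unique, because the bad angle produced by (2.2.1) is tied to a direction-extremal geodesic that need not be a side of the next triangle you cut. The paper's resolution is structural, not bookkeeping: it proves Lemma \ref{lem2.4}, that hypothesis (\ref{eqn2.25}) itself forces uniqueness of minimal geodesics in $U_x$, using Lemma \ref{lem2.5} together with the first variation formula and tangent-cone convergence (Lemma \ref{lem1.8}). Relatedly, your construction never uses (\ref{eqn2.25}) until the final contradiction, whereas the paper needs it three times along the way (Lemma \ref{lem2.4}; ruling out an interval of perpendicular feet in Step 1; and (\ref{eqn2.38}) in Step 3) --- a sign that the argument you propose is structurally the Case 1 argument, which does not survive the reversal of inequalities here.
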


Actually, the proofs for Cases 1-3 mainly show a corresponding Lemma \ref{lem2.3}.
Compared with them, a main difficulty here is that we can not conclude
(\ref{C2.8}) because we have no inequalities in (\ref{C2.7}) as in Cases 1-2 nor the CBA-property in Case 3.
Another main difficulty here appears in looking for a triangle with an acute `bad' angle.
In Cases 1-3, a `bad' angle leads to a situation
where we can apply Lemma \ref{lem2.1} or (2.2.3) to locate a smaller triangle with a `bad' angle.
And step by step, we can locate the desired triangle.
However, in Case 4, such a method fails when we try to apply (2.2.1) unless
there is a unique minimal geodesic between any two points in $U_x$.

To overcome the second difficulty right above,
we have the following key observation from (\ref{eqn2.25}).

\begin{lemma}\label{lem2.4}
Let $x\in X$ be a CBB-type point, and let $U_x$ be a sufficiently small neighborhood of $x$ satisfying (\ref{eqn2.25}).
Then there is a unique minimal geodesic between any two distinct points in $U_x$.
\end{lemma}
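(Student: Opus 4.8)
The plan is to argue by contradiction: suppose there are two distinct minimal geodesics $\gamma_1\neq\gamma_2$ joining two points $a,b\in U_x$, and manufacture a configuration violating (\ref{eqn2.25}). Write $L=|ab|$ and parametrize each $\gamma_i$ by arclength from $a$. The heart of the matter is a short, clean contradiction that arises as soon as one can exhibit an interior point $q\in\gamma_1^\circ$ whose nearest point $p$ on $\gamma_2$ lies in $\gamma_2^\circ$; granting such a pair $(q,p)$ with $|qp|=|q\gamma_2|>0$ (automatic once $q\notin\gamma_2$), the argument runs in four quick steps.

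First, since $p\in\gamma_2^\circ$ realizes the distance, hypothesis (\ref{eqn2.25}) (with $[r_1r_2]=\gamma_2$) gives $\tilde\angle_k qpa\geq\frac\pi2$ and $\tilde\angle_k qpb\geq\frac\pi2$. As $x$ is a CBB-type point, Theorem \ref{thm1.3} yields $\angle qpa\geq\tilde\angle_k qpa$ and $\angle qpb\geq\tilde\angle_k qpb$, while Lemma \ref{lem1.6} gives $\angle qpa+\angle qpb=\pi$. Adding the two lower bounds forces $\angle qpa=\angle qpb=\frac\pi2$ and hence $\tilde\angle_k qpa=\tilde\angle_k qpb=\frac\pi2$. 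Second, a right comparison angle at $p$ means the comparison triangles in $\mathbb S^2_k$ have a right angle at $\tilde p$, so (\ref{eqn0.1}) applies; since $|qp|>0$, it forces $|qa|>|pa|$ and $|qb|>|pb|$, strictly, in each of the cases $k>0$, $k=0$, $k<0$. Third, $q$ lies on the minimal geodesic $\gamma_1$ and $p$ on the minimal geodesic $\gamma_2$, so $|qa|+|qb|=L=|pa|+|pb|$. Summing the two strict inequalities gives $L>L$, the desired contradiction.

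The remaining, and main, obstacle is thus to \emph{guarantee an interior foot}. I would take $q^\ast=\gamma_1(t^\ast)$ maximizing $t\mapsto\dist(\gamma_1(t),\gamma_2)$; this maximum is positive because, geodesics in a CBB-type space not branching, $\gamma_1$ and $\gamma_2$ cannot coincide on a subarc. If the nearest point of $q^\ast$ on $\gamma_2$ were the endpoint $a$ (resp. $b$), then $\dist(q^\ast,\gamma_2)=|q^\ast a|=t^\ast$ (resp. $=L-t^\ast$) is just the arclength of $q^\ast$ along $\gamma_1$, so as a function of $t$ the distance equals $t$ near an $a$-foot and $L-t$ near a $b$-foot. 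An interior maximum of such a function can occur only at a corner with one-sided slopes $+1$ and $-1$, i.e. only if $t^\ast=L/2$, both endpoints $a,b$ are nearest points at distance $L/2$, and (by Lemma \ref{lem1.4}, the CBB first variation) the angle between $\gamma_1$ and $\gamma_2$ is $\geq\frac\pi2$ at each of $a,b$. Throughout this reduction one repeatedly meets the interior-foot situation of the previous paragraph, and each occurrence already closes the proof; only the single degenerate bigon survives.

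To finish I would exclude this degenerate case. Writing $m_1=q^\ast$ and $m_2$ for the midpoint of $\gamma_2$, the function $s\mapsto|m_1\gamma_2(s)|$ has minimum $L/2$ attained exactly at the two endpoints, so its restriction to any interior subarc $[\gamma_2(\varepsilon),\gamma_2(L-\varepsilon)]$ attains its minimum either at a point of $\gamma_2^\circ$ — producing an interior foot and hence the contradiction above — or again at the subarc's endpoints. Forbidding the latter for all small $\varepsilon$ pins down the fine structure of the bigon (symmetric, with $m_1,m_2$ mutually farthest and $|m_1m_2|\geq L/2$), and I expect to reach a contradiction by applying (\ref{eqn2.25}) to the auxiliary geodesic $[m_1m_2]$ — whose midpoint should be a closer, and interior, foot of $a$ — together with the CBB comparison of Theorem \ref{thm1.3}, if necessary after passing to the tangent cone at $a$ via Lemma \ref{lem1.8} to linearize the obtuse-angle hypothesis. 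This final step, forcing an interior foot to exist despite both vertex angles of the bigon being obtuse, is where the real work lies; the four-step Pythagorean contradiction itself is short and is the part I am confident about.
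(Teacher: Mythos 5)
Your four-step core is correct, and in fact simpler than you make it: once $p\in\gamma_2^\circ$ is an interior foot of $q$ with $|qp|>0$, the hypothesis (\ref{eqn2.25}) alone gives $\tilde\angle_k qpa\geq\frac\pi2$ and $\tilde\angle_k qpb\geq\frac\pi2$, and the law of cosines in $\mathbb S^2_k$ applied to the comparison triangles (which by definition have the true side lengths) already yields the strict inequalities $|qa|>|pa|$ and $|qb|>|pb|$; summing against $|qa|+|qb|=L=|pa|+|pb|$ kills the configuration. You do not need, and cannot legitimately invoke, Theorem \ref{thm1.3} to get $\angle qpa\geq\tilde\angle_k qpa$: a CBB-type point only gives curvature $\geq k_x$ for some unknown $k_x$, not $\geq k$, so that step of yours is unjustified --- but it is also unnecessary. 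Your reduction via the maximizer of $t\mapsto\dist(\gamma_1(t),\gamma_2)$, using non-branching and the $1$-Lipschitz structure, correctly shows that either an interior foot occurs (done) or $f(t)=\min(t,L-t)$ identically, i.e.\ only the symmetric bigon with all feet at the endpoints survives.

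The genuine gap is exactly where you place it: the degenerate bigon is never excluded, and it cannot be excluded by probing with points of $\gamma_1$ and $\gamma_2$ alone. Two meridians on a round sphere meeting at both poles at an angle in $[\frac\pi2,\pi]$ realize your bigon inside an honest CBB space: every point of either meridian has its nearest point on the other at an endpoint, so your machinery produces no interior foot, and your proposed finisher --- that the midpoint of $[m_1m_2]$ ``should be'' a closer interior foot of $a$ --- is precisely what remains unproven (in the sphere model every point of $[m_1m_2]$ is equidistant from $a$, so neither ``closer'' nor interiority of a chosen foot is automatic). The paper closes this by a different mechanism you never bring in: Lemma \ref{lem2.5} (two minimal geodesics between nearby points make an angle $<\pi$ at one endpoint, imported from [BGP],[LR]) guarantees a wedge at, say, $r_1$, into which one can send an auxiliary third geodesic $[r_1q]$ making angle $<\frac\pi2$ with the direction of $[r_1r_2]_2$ as in (\ref{eqn2.28}); then the first variation formula (Lemma \ref{lem1.4}) and the blow-up (Lemma \ref{lem1.8}) give the asymptotics (\ref{eqn2.29})--(\ref{eqn2.30}), producing points $q_j$ whose feet $\bar q_j$ on $[r_1r_2]_2$ are \emph{interior} and satisfy $|r_2q_j|<|r_2\bar q_j|$, whence $\tilde\angle_k q_j\bar q_jr_2<\frac\pi2$, contradicting (\ref{eqn2.25}). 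In short: your contradiction-from-an-interior-foot is a valid (and nice) alternative to the paper's distance comparison, but the existence of the violating configuration --- the actual content of the lemma in the obtuse-bigon case --- requires the third-geodesic construction resting on Lemma \ref{lem2.5}, and your proposal stops short of it, as you acknowledge.
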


In the proof of Lemma \ref{lem2.4}, we need the following property of CBB-type Alexandrov spaces.

\begin{lemma}[\cite{BGP}, \cite{LR}]\label{lem2.5}
Let $x\in X$ be a CBB-type point. Then there is a sufficiently small neighborhood $U$ of $x$ such that
if there are two minimal geodesics between two points $r_1$ and $r_2$ in $U$,
then they form an angle less than $\pi$ at $r_1$ or $r_2$.
\end{lemma}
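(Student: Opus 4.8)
The plan is to argue by contradiction and to reduce the statement to the fact that a metric cone carries no closed local geodesic of finite length. Suppose the conclusion fails, so that for every neighborhood of $x$ there are points $r_1,r_2$ joined by two distinct minimal geodesics $\gamma_1\ne\gamma_2$ with the angle between $\gamma_1$ and $\gamma_2$ equal to $\pi$ at \emph{both} $r_1$ and $r_2$. First I would observe that the angle-$\pi$ condition at an endpoint forces the two geodesics to fit together into a local geodesic there: by the first variation formula (Lemma \ref{lem1.4}) together with the triangle inequality, for $a\in\gamma_1$ and $b\in\gamma_2$ close to $r_1$ one gets $|ab|=|ar_1|+|r_1b|+o(|ar_1|+|r_1b|)$, and a standard limiting argument upgrades this to genuine local minimality across $r_1$; here the non-branching of geodesics in CBB spaces (\cite{BGP}) guarantees that $\gamma_1,\gamma_2$ do not share a subsegment, so the resulting curve is nondegenerate. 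Carrying this out at both $r_1$ and $r_2$ shows that $\sigma\triangleq\gamma_1\cup\gamma_2$ is a \emph{closed local geodesic} of length $2\ell$, where $\ell\triangleq|r_1r_2|$. Thus it suffices to show that a sufficiently small neighborhood of a CBB-point contains no closed local geodesic.

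To prove this I would blow up at $x$. Since the hypothesized bigons occur in arbitrarily small neighborhoods, their endpoints may be taken with $r_1^i,r_2^i\to x$, whence $\ell_i\triangleq|r_1^ir_2^i|\to0$. Rescaling the metric by $\ell_i^{-1}$ and invoking Lemma \ref{lem1.8}, $(\ell_i^{-1}X,x)$ converges in the Gromov--Hausdorff sense to the tangent cone $C_xX$, a metric cone over $\Sigma_xX$ (which is CBB with curvature $\ge 1$, hence of diameter $\le\pi$). Along this convergence the rescaled closed local geodesics $\sigma_i$, now of fixed length $2$, should subconverge to a closed local geodesic $\sigma_\infty$ of length $2$ in $C_xX$; the angle-$\pi$ conditions pass to the limit by the semicontinuity of angles recorded after Lemma \ref{lem1.6}, so $\sigma_\infty$ is genuinely locally minimizing and closed.

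The contradiction then comes from the cone structure of $C_xX$. A local geodesic in a metric cone is of one of two types: if it avoids the apex then, in the coordinates $\Sigma_xX\times(0,\infty)$ with cone metric, it is unbounded (running off to $r\to\infty$ on both ends, exactly as straight lines do in $\mathbb{R}^2$), while if it passes through the apex it consists of two rays issuing in directions at distance $\ge\pi$ in $\Sigma_xX$ and is again unbounded. In either case it has infinite length, so $C_xX$ admits no closed local geodesic of length $2$, contradicting the existence of $\sigma_\infty$. Hence small neighborhoods carry no bigon of the forbidden type, which proves the lemma. I expect the main obstacle to be the convergence step: one must keep the endpoints $r_1^i,r_2^i$ (at rescaled distance $1$) from escaping to infinity in $C_xX$ and must verify that the limit of the $\sigma_i$ is a bona fide local geodesic rather than a broken or collapsed curve. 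Re-centering the rescaling at the bigon itself and using the stability of minimal geodesics and of the angle-$\pi$ condition under Gromov--Hausdorff convergence is the delicate point, whereas the junction-smoothing and the cone computation are routine.
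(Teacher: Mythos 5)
The paper never proves Lemma \ref{lem2.5} at all---it is quoted from [BGP] and [LR]---so the only question is whether your argument stands on its own, and it does not: it fails at the very first step. You claim that in a CBB space an angle of $\pi$ between two minimal geodesics at a common endpoint forces their concatenation to be a local geodesic, upgrading the first-variation estimate $|ab|=|ar_1|+|r_1b|+o(|ar_1|+|r_1b|)$ to exact local minimality by ``a standard limiting argument''. No such upgrade exists: in CBB spaces the comparison inequality goes the wrong way ($\tilde\angle_k\leq\angle$, Theorem \ref{thm1.3}), so $\angle ar_1b=\pi$ yields only the trivial bound $|ab|\leq|ar_1|+|r_1b|$, and the statement is in fact false. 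Concretely, let $K=\{(x,y):|x|^{3/2}\leq y\leq 1\}\subset\mathbb{R}^2$ and let $X$ be the double of $K$, a compact Alexandrov space of curvature $\geq 0$. The two vertical segments from $p=(0,0)$ to the points $(0,t)$ in the two copies are minimal geodesics (segments in a convex face), and a direct computation gives $\cos\tilde\angle_0\big((0,t)_{\rm top},p,(0,t)_{\rm bot}\big)=-1+O(t^2)$, so they make angle exactly $\pi$ at $p$; yet their concatenation is not minimal at \emph{any} scale, since crossing the rim at $(x,|x|^{3/2})$ with $|x|$ small of order $t^2$ is strictly shorter, with gain of order $t^3$. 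So the angle-$\pi$ condition at an endpoint of your bigon does not produce a closed local geodesic, and the reduction on which the entire proof rests is unavailable; ruling out precisely this kind of behaviour for bigons is essentially the content of the lemma itself, so your step (a) comes close to assuming what is to be proved.

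Even granting the reduction, the blow-up step does not close. Lemma \ref{lem1.8} gives a cone limit only for a \emph{fixed} basepoint; your bigons sit at distance $|xr_1^i|$ from $x$ with no control on the ratio $|xr_1^i|/\ell_i$, so after rescaling by $\ell_i^{-1}$ they may escape to infinity in $C_xX$, and re-centering at $r_1^i$---which you yourself flag as the delicate point---produces a limit that is merely a complete space of curvature $\geq 0$, not a cone; such spaces (a flat cylinder, say) do contain closed geodesics, so the intended contradiction evaporates. The two stability facts you invoke also fail in the direction you need: limits of the $\sigma_i$ need not be local geodesics (the radius of local minimality can collapse, exactly as in the example above), and the semicontinuity recorded before Lemma \ref{lem1.7} for lower curvature bounds reads $\angle\leq\liminf\angle_i$, which transfers the useless upper bound $\pi$ to the limit, not the lower bound $\geq\pi$ required to keep $\sigma_\infty$ unbroken. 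The cone computation itself (no closed local geodesic in a metric cone, via convexity of $r^2$ along geodesics off the apex) is correct, but it is the only sound piece of the argument. For an actual proof one must consult [LR], whose quantitative analysis of geodesic bigons in CBB spaces is what the paper is citing.
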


\begin{proof}[Proof of Lemma \ref{lem2.4}]\

We argue by contradiction. Let $r_1$ and $r_2$ be two points in $U_x$, and assume that there are two minimal geodesics between them,
denoted by $[r_1r_2]_1$ and $[r_1r_2]_2$. By Lemma \ref{lem2.5},
$[r_1r_2]_1$ and $[r_1r_2]_2$ form an angle less than $\pi$ at  $r_1$ or $r_2$, say $r_1$, i.e. $|(\uparrow_{r_1}^{r_2})_1(\uparrow_{r_1}^{r_2})_2|<\pi$.
Then, by considering $\Sigma_{r_1}X$ (for it refer to Section 1), it is easy to see that there is a minimal geodesic $[r_1q]$
such that
\begin{equation}\label{eqn2.28}
|\uparrow_{r_1}^q(\uparrow_{r_1}^{r_2})_1|<|\uparrow_{r_1}^q(\uparrow_{r_1}^{r_2})_2|<\frac\pi2
\end{equation}
(in particular, if $|(\uparrow_{r_1}^{r_2})_1(\uparrow_{r_1}^{r_2})_2|<\frac\pi2$, we can let $[r_1q]=[r_1r_2]_1$).
We select $q_j\in [r_1q]\setminus\{r_1\}$ such that $q_j\to r_1$ as $j\to\infty$. Note that,
without loss of generality, we can assume that $|\uparrow_{r_1}^q(\uparrow_{r_1}^{r_2})_1|=|\uparrow_{r_1}^q\Uparrow_{r_1}^{r_2}|$.
By Lemma \ref{lem1.4}, it follows that, as $j\to\infty$,
\begin{equation}\label{eqn2.29}
|r_2q_j|=|r_1r_2|-|r_1q_j|\cos|\uparrow_{r_1}^q(\uparrow_{r_1}^{r_2})_1|+o(|r_1q_j|).
\end{equation}
On the other hand, by Lemma \ref{lem1.4}, `$|\uparrow_{r_1}^q(\uparrow_{r_1}^{r_2})_2|<\frac\pi2$' implies that there is $\bar q_j\in [r_1r_2]_2^\circ$
such that $|q_j\bar q_j|=|q_j[r_1r_2]_2|$.  And by Lemma \ref{lem1.8}, we have that, as $j\to\infty$,
\begin{equation}\label{eqn2.30}
|r_2\bar q_j|=|r_1r_2|-|r_1\bar q_j|=|r_1r_2|-|r_1q_j|\cos|\uparrow_{r_1}^q(\uparrow_{r_1}^{r_2})_2|+o(|r_1q_j|).
\end{equation}
It follows from (\ref{eqn2.28})-(\ref{eqn2.30}) that $|r_2q_j|<|r_2\bar q_j|$ for sufficiently large $j$.
Since $U_x$ can be sufficiently small, `$|r_2q_j|<|r_2\bar q_j|$' implies that $\tilde\angle_k q_j\bar q_jr_2<\frac\pi2$, which contradicts (\ref{eqn2.25}).
\end{proof}

In order to solve the first difficulty mentioned above, we will use the following technical property of CBB-type Alexandrov spaces,
especially (\ref{lem2.6}.2), in Step 4 of the proof of Lemma \ref{lem2.3} below.

\begin{lemma} \label{lem2.6}
Let $X$ be a complete Alexandrov space with curvature $\geq \kappa$, and let $p,q_i, r_i\in X$ with $q_i\to p$ and $r_i\to p$ as $i\to \infty$.  Then the following holds:

\vskip1mm

\noindent{\rm (\ref{lem2.6}.1) (\cite{BGP})} As $i\to\infty$, for any triangle $\triangle pq_ir_i$, we have that
$$\angle q_ipr_i-\tilde\angle_\kappa q_ipr_i\to 0,\ \angle pq_ir_i-\tilde\angle_\kappa pq_ir_i\to 0,
\ \angle pr_iq_i-\tilde\angle_\kappa pr_iq_i\to 0.$$

\vskip1mm

\noindent{\rm (\ref{lem2.6}.2)} Additionally, given $[pq_i]\ni p_i$ and $[q_ir_i]\ni s_i$,
if there is $c_1\in(0,1)$ and $c_2>0$ such that
\begin{equation}\label{eqn2.50}
|q_is_i|<c_1|q_ir_i| \text{ and }
\min\{\tilde\angle_\kappa q_ip_is_i,\tilde\angle_\kappa p_iq_is_i,\tilde\angle_\kappa p_is_iq_i\}>c_2
\end{equation}
for all $i$, then
as $i\to\infty$, for any triangle $\triangle p_iq_is_i$, we have that
$$
\angle q_ip_is_i-\tilde\angle_\kappa q_ip_is_i\to 0,\ \angle p_iq_is_i-\tilde\angle_\kappa p_iq_is_i\to 0,
\ \angle p_is_iq_i-\tilde\angle_\kappa p_is_iq_i\to 0.
$$
\end{lemma}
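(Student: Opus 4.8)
The plan is to deduce (\ref{lem2.6}.2) from (\ref{lem2.6}.1) by a rescaling (blow-up) argument. Since $X$ has curvature $\geq\kappa$, Toponogov's theorem gives $\angle\geq\tilde\angle_\kappa$ at each vertex of $\triangle p_iq_is_i$, so it suffices to bound each of the three differences $\angle-\tilde\angle_\kappa$ above by $o(1)$. First I would pin down two of the three angles using (\ref{lem2.6}.1), which applies to \emph{any} triangle having the fixed point $p$ as a vertex. Because $p_i\in[pq_i]$ and $s_i\in[q_ir_i]$ preserve the directions at $q_i$, one has the exact identity $\angle p_iq_is_i=\angle pq_ir_i$, so (\ref{lem2.6}.1) applied to $\triangle pq_ir_i$ controls the angle at $q_i$. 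Likewise, since $p_i\in[pq_i]^\circ$, Lemma \ref{lem1.6} gives $\angle q_ip_is_i=\pi-\angle pp_is_i$, and (\ref{lem2.6}.1) applied to $\triangle pp_is_i$ controls the angle at $p_i$. This reduces matters to two purely metric (comparison-angle) identities, $\tilde\angle_\kappa pq_ir_i-\tilde\angle_\kappa p_iq_is_i\to0$ and $\tilde\angle_\kappa pp_is_i+\tilde\angle_\kappa p_iq_is_i\to\pi$, together with the remaining angle at $s_i$.

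To establish these I would rescale by $1/\delta_i$, where $\delta_i=\max\{|p_iq_i|,|q_is_i|,|p_is_i|\}$; the hypothesis $\min\{\tilde\angle_\kappa q_ip_is_i,\tilde\angle_\kappa p_iq_is_i,\tilde\angle_\kappa p_is_iq_i\}>c_2$ makes all three sides comparable to $\delta_i$, so the rescaled triangle is non-degenerate of unit size. Passing to a subsequence, suppose first that $|pq_i|/\delta_i$ stays bounded. Then $(\tfrac1{\delta_i}X,p)\to C_pX$ by Lemma \ref{lem1.8}; writing $o$ for the cone vertex (the limit of $p$) and bars for the limits of the points, the five points converge into the sub-cone $W\subset C_pX$ over the minimal geodesic of $\Sigma_pX$ joining the limit directions of $[pq_i]$ and $[pr_i]$. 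As $\Sigma_pX$ has curvature $\geq1$ this geodesic has length $\leq\pi$, so $W$ is a convex flat sector isometric to a planar convex region, and the limit triangle is genuinely flat. Since comparison angles depend continuously on the convergent side lengths, the flat incidences $\bar p_*\in[o\bar q]$ and $\bar s\in[\bar q\bar r]$ force both metric identities in the limit.

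The hard part will be the complementary regime $|pq_i|/\delta_i\to\infty$, where the triangle is vastly smaller than its distance to $p$ and the base point must be taken at the moving point $q_i$. Here I would show that $(\tfrac1{\delta_i}X,q_i)$ converges to a split space $\mathbb R\times C_\xi(\Sigma_pX)$, the $\mathbb R$-factor being the radial direction toward $p$ and the second factor an iterated tangent cone of $\Sigma_pX$ at $\xi=\lim\uparrow_p^{q_i}$. The two quantitative hypotheses are consumed precisely here: $p_i\in[pq_i]$ forces $\uparrow_{q_i}^{p_i}=\uparrow_{q_i}^{p}$ to be the radial ($\mathbb R$-factor) direction, so $\lim p_i$ lies on the axis $\mathbb R\times\{*\}$; and $|q_is_i|<c_1|q_ir_i|$ with $c_1<1$ guarantees that $[q_ir_i]$ runs a definite distance beyond $s_i$, so $[q_is_i]$ converges to a genuine interior sub-arc of a limit geodesic and $\lim s_i$ is an interior point. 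Because both other vertices lie on the axis, the geodesics joining them to $\lim s_i$ stay in a single flat half-plane $\mathbb R\times(\text{ray})$, so again the limit triangle is flat. Rigorously identifying this split limit for a moving base point, and ruling out extra angle hidden in the (a priori singular) vertex $\lim s_i$, is the technical heart of the argument.

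Finally, in either regime the limit triangle is flat, so its three comparison angles converge to the corresponding flat angles, which coincide with the actual angles of the limit triangle. Feeding this back through the two reductions of the first step handles the angles at $q_i$ and $p_i$; for the angle at $s_i$ I would combine the flatness of the limit (which supplies the missing upper bound) with Toponogov's lower bound $\angle p_is_iq_i\geq\tilde\angle_\kappa p_is_iq_i$ and the relation $\angle p_is_iq_i=\pi-\angle p_is_ir_i$ from Lemma \ref{lem1.6}. This yields $\angle-\tilde\angle_\kappa\to0$ at every vertex of $\triangle p_iq_is_i$, which is (\ref{lem2.6}.2).
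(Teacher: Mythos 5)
Your reductions at the three vertices are sound and, in fact, close to the paper's own skeleton: the identity $\angle p_iq_is_i=\angle pq_ir_i$ together with (\ref{lem2.6}.1) handles the vertex $q_i$ (here you do not even need a blow-up, since monotonicity of comparison angles in a CBB space gives $\tilde\angle_\kappa p_iq_is_i\geq\tilde\angle_\kappa pq_ir_i$ directly, so $0\leq\angle p_iq_is_i-\tilde\angle_\kappa p_iq_is_i\leq\angle pq_ir_i-\tilde\angle_\kappa pq_ir_i$, which is exactly the paper's first step); and the $\pi$-squeezes via Lemma \ref{lem1.6} at $p_i$ and $s_i$ are also what the paper uses, reduced to the two almost-identities $\tilde\angle_\kappa pp_is_i+\tilde\angle_\kappa q_ip_is_i\to\pi$ and its analogue at $s_i$. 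Your bounded regime ($|pq_i|/\delta_i$ bounded) is also fine: Lemma \ref{lem1.8} applies at the \emph{fixed} base point $p$, the limit configuration lies in a convex flat sector of $C_pX$, and the needed comparison identities follow from continuity of comparison angles on nondegenerate (thanks to $c_2$) triangles.

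The genuine gap is the regime $|pq_i|/\delta_i\to\infty$, and it is not a removable technicality. Lemma \ref{lem1.8} says nothing about rescalings based at the \emph{moving} points $q_i$; all you get there, after passing to a subsequence, is some curvature $\geq 0$ limit containing a ray toward $p$ --- a ray, not a line, since geodesics in a CBB space need not extend past $q_i$ --- so no splitting theorem produces your claimed $\mathbb{R}\times C_\xi(\Sigma_pX)$, and such moving-base blow-ups need not be metric products nor related to iterated tangent cones of $\Sigma_pX$ at all (for instance, if $X$ has cone singularities along a sequence $x_i\to p$ and $q_i$ sits at distance comparable to $\delta_i$ from $x_i$, the limit is a singular cone). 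You flag this step yourself as ``the technical heart'' and supply no proof; and the regime genuinely occurs, because the hypotheses (\ref{eqn2.50}) constrain only the shape of $\triangle p_iq_is_i$, not its scale relative to $|pq_i|$. The paper sidesteps this entirely by working at the triangle's own scale with no blow-up: it introduces the doubled points $u_i\in[q_ir_i]$ with $|u_iq_i|=2|s_iq_i|$ (possible since $c_1<1$) and $v_i\in[q_ip]$ with $|v_iq_i|=2|p_iq_i|$, uses Toponogov's bound $|p_is_i|\geq|\tilde p_i\tilde s_i|$ in the comparison triangle of $\triangle q_ip_iu_i$ to prove the almost-rigidity (\ref{eqn2.52}), and then runs the same $\pi$-squeeze; the only delicate case in that argument ($|q_ip_i|>\frac12|q_ip|$, i.e.\ $p_i$ near $p$) is dispatched with the Law of Sine and (\ref{lem2.6}.1). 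To repair your proof you would need either to prove the splitting/flatness of the moving-base limit under (\ref{eqn2.50}) --- which is essentially as hard as the lemma itself --- or to replace the second regime with a scale-local comparison argument of the paper's doubling type.
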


\begin{proof}

We just need to prove (\ref{lem2.6}.2).

First of all, by the reason for (\ref{eqn1.2}), we know that
$0\leq\angle p_iq_is_i-\tilde\angle_\kappa p_iq_is_i\leq \angle pq_ir_i-\tilde\angle_\kappa pq_ir_i$,
so it follows from (\ref{lem2.6}.1) that
\begin{equation}\label{eqn2.51}
\angle p_iq_is_i-\tilde\angle_\kappa p_iq_is_i\to 0 \text{ as } i\to\infty.
\end{equation}

Next, we show that $\angle q_is_i p_i-\tilde\angle_\kappa q_is_ip_i\to 0$ as $i\to\infty$.
Consider the point $u_i\in [q_ir_i]$ with $|u_iq_i|=2|s_iq_i|$ or $u_i=r_i$ when $|q_is_i|<\frac12|q_ir_i|$
or $|q_is_i|\geq\frac12|q_ir_i|$ respectively. Let $\triangle \tilde{q}_i\tilde{p}_i\tilde{u}_i\subset\mathbb S_\kappa^2$
be a comparison triangle of $\triangle q_ip_iu_i$, and let $\tilde s_i\in[\tilde{q}_i\tilde{u}_i]$
such that $|\tilde s_i\tilde q_i|=|s_iq_i|$. By Definition \ref{dfn1.2}, we know that $|p_is_i|\geq |\tilde p_i\tilde s_i|$.
On the other hand, by the same reason for (\ref{eqn2.51}), we have that $\angle p_iq_iu_i-\tilde\angle_\kappa p_iq_iu_i\to 0 \text{ as } i\to\infty$.
Plus (\ref{eqn2.50}) and by Theorem \ref{thm1.3}, we get that $|p_is_i|$ is almost equal to $|\tilde p_i\tilde s_i|$;
precisely,
\begin{equation}\label{eqn2.52}
\lim_{i\to\infty}\frac{|p_is_i|-|\tilde p_i\tilde s_i|}{|p_is_i|}=0.
\end{equation}
Together with (\ref{eqn2.50}), this implies that
\begin{equation}\label{eqn2.53}
\tilde\angle_\kappa q_is_ip_i-\angle\tilde{q}_i\tilde{s}_i\tilde{p}_i\to 0 \text{ and }
\tilde\angle_\kappa u_is_ip_i-\angle\tilde{u}_i\tilde{s}_i\tilde{p}_i\to 0 \text{ as } i\to\infty.
\end{equation}
Since $\angle q_is_ip_i\geq \tilde\angle_\kappa q_is_ip_i$ and  $\angle u_is_ip_i\geq \tilde\angle_\kappa u_is_ip_i$ (by Theorem \ref{thm1.3}),
and $\angle q_is_ip_i+\angle u_is_ip_i=\pi$ and $\angle\tilde{q}_i\tilde{s}_i\tilde{p}_i+\angle\tilde{u}_i\tilde{s}_i\tilde{p}_i=\pi$,
it follows from (\ref{eqn2.53}) that
$$\angle q_is_ip_i-\tilde\angle_\kappa q_is_ip_i\to 0 \text{ (and }
\angle u_is_ip_i-\tilde\angle_\kappa u_is_ip_i\to 0) \text{ as } i\to\infty.$$

At last, we show that $\angle q_ip_is_i-\tilde\angle_\kappa q_ip_is_i\to 0$ as $i\to\infty$.
Similarly, we consider $v_i\in [q_ip]$ with $|v_iq_i|=2|p_iq_i|$ or $v_i=p$ when $|q_ip_i|<\frac12|q_ip|$
or $|q_ip_i|\geq\frac12|q_ip|$ respectively, and a comparison triangle $\triangle \tilde{q}_i\tilde{s}_i\tilde{v}_i\subset\mathbb S_\kappa^2$
of $\triangle q_is_iv_i$ and $\tilde p_i\in [\tilde{q}_i\tilde{v}_i]$ with $|\tilde q_i\tilde p_i|=|q_ip_i|$.
It is easy to see that (\ref{eqn2.52}) still holds in the situation here.
And if we can show
\begin{equation}\label{eqn2.54}
\tilde\angle_\kappa s_ip_iq_i-\angle\tilde{s}_i\tilde{p}_i\tilde{q}_i\to 0 \text{ and }
\tilde\angle_\kappa s_ip_iv_i-\angle\tilde{s}_i\tilde{p}_i\tilde{v}_i\to 0 \text{ as } i\to\infty
\end{equation}
(similar to (\ref{eqn2.53})), then we can conclude that $\angle q_ip_is_i-\tilde\angle_\kappa q_ip_is_i\to 0$ as $i\to\infty$.
Indeed, we can similarly show (\ref{eqn2.54}) except possibly when $|q_ip_i|>\frac12|q_ip|$.
Note that $p_i$, unlike $s_i$ satisfying $|q_is_i|<c_1|q_ir_i|$,
may be sufficiently close to $p$ and even equal to $p$.
When $\frac12|q_ip|<|q_ip_i|\ (\leq|q_ip|)$, by the latter part of (\ref{eqn2.50})
there is $c_3>0$ such that $\tilde\angle_\kappa s_iq_ip>c_3$ and $\tilde\angle_\kappa q_ips_i>c_3$ for all $i$. Then plus
`$\angle q_ips_i-\tilde\angle_\kappa q_ips_i\to 0$ as $i\to\infty$ (by (2.6.1))' and (\ref{eqn2.52}),
we can apply the Law of Sine to conclude (\ref{eqn2.54}).
\end{proof}

We will end this section with proving Lemma \ref{lem2.3} (and so the proof for Case 4 is completed).

\begin{proof}[Proof of Lemma \ref{lem2.3}] \

First of all, in this proof, we can assume that there is a UNIQUE
minimal geodesic between any two distinct points in $U_x$ (see Lemma \ref{lem2.4}).

Since it is assumed that $X$ is not of curvature $\leq k$ around $x$, by Theorem \ref{thm1.3}
there is a  triangle $\triangle pqr\subset U_x$
containing a `bad' angle, say $\angle qpr$ (i.e. $\angle qpr>\tilde\angle_k qpr$).
Then similar to the existence of $\bar s$ and $\bar t$ in the proof for Case 1,
there is $\bar s\in [pr]\setminus\{p\}$ and $\bar t\in [pq]\setminus\{p\}$ such that,
for all $s\in[p\bar s]^\circ$ and $t\in [p\bar t]$,
\begin{equation}\label{eqn2.35}
|\bar ts|>|\tilde{\bar t}\tilde s|, |\bar st|>|\tilde{\bar s}\tilde t|, |\bar s\bar t|=|\tilde{\bar s}\tilde{\bar t}|  \text{ and } \angle \bar tp\bar s-\tilde\angle_k\bar tp\bar s=\angle qpr-\tilde\angle_k qpr,
\end{equation}
where $\tilde{\bar s},\tilde s,\tilde{\bar t},\tilde t$ belong to the comparison triangle
$\triangle\tilde p\tilde q\tilde r\subset\Bbb S^2_k$ of $\triangle qpr$ and correspond to $\bar s,s,\bar t,t$ respectively with
$\tilde {\bar s}\in [\tilde p\tilde r]$ and $|\tilde{\bar s}\tilde p|=|\bar sp|$, etc.

Our strategy is also to look for a $\triangle pqr$ with a `bad' angle $\angle qpr$
such that the corresponding $\triangle p\bar t\bar s$ is the desired triangle.
We will fulfill the task through the following four steps based on a general $\triangle pqr$ in which $\angle qpr$ is a `bad' angle.

(In Cases 1-3, for any $\triangle pqr$  with `bad' angle $\angle qpr$,
we can conclude that at least one of $\angle p\bar t\bar s$ and $\angle p\bar s\bar t$ is less than $\frac\pi2$,
so it suffices to find a triangle with an acute `bad' angle.
Unfortunately, as mentioned above, in the situation here
we cannot conclude such a property for a general triangle with a `bad' angle.)

\vskip1mm

\noindent {\bf Step 1}. To show that $\triangle qpr$ can be chosen to satisfy that there is at most one point $t\in [pr]^\circ$
such that $[qt]$ is perpendicular to $[pr]$.

\vskip1mm

Note that it suffices to consider the case where there are two distinct points
$t_1, t_2\in [pr]^\circ$ such that $\angle qt_ip=\angle qt_ir=\frac\pi2$.
Claim: up to a new choice, $t_i$ satisfies that either $\angle qtt_1=\angle qtt_2=\frac\pi2$ for all
$t\in [t_1t_2]^\circ$, or one of $\angle qtt_i$ ($i=1,2$) is less than $\frac\pi2$ for all
$t\in [t_1t_2]^\circ$. In fact, if there is $t\in [t_1t_2]^\circ$ such that $\angle qtt_1<\frac\pi2$,
then $\angle qt't_1<\frac\pi2$ for $t'$ sufficiently close to $t$ (by Lemma \ref{lem1.7}), which implies the claim.

If $\angle qtt_1=\angle qtt_2=\frac\pi2$ for all $t\in [t_1t_2]^\circ$, then it has to hold that $|qt|=|qt_1|=|qt_2|$ by Lemma \ref{lem1.4}.
Since $U_x$ can be sufficiently small, it follows that $\tilde\angle_k qtt_i<\frac\pi2$, which contradicts (\ref{eqn2.25}).

If $\angle qtt_1<\frac\pi2$ or $\angle qtt_2<\frac\pi2$ for all
$t\in [t_1t_2]^\circ$, $\triangle qpr$ can be chosen to be $\triangle qt_1t_2$. Note that
$\angle qt_1t_2=\angle qt_2t_1=\frac\pi2$, and at least one of $\tilde\angle_k qt_1t_2$ and $\tilde\angle_k qt_2t_1$
is less than $\frac\pi2$ as long as $U_x$ is small enough; i.e., $\triangle qt_1t_2$ has a `bad' angle.

\vskip1mm

\noindent {\bf Step 2}. To show that
there is $[\hat p\hat r]\subset[pr]^\circ$ such that  $\angle q\hat p\hat r$ is acute and `bad' in $\triangle q\hat p\hat r$.

\vskip1mm

Since $\angle qpr>\tilde\angle_k qpr$, by Lemma \ref{lem1.4}
there is corresponding $r'\in[pr]^\circ$ and $\tilde r'\in[\tilde p\tilde r]$ which satisfy (\ref{eqn2.26}) (cf. (\ref{eqn2.9})).
Then similarly we can apply  (2.2.1) to conclude that
$\triangle qr'p$ or $\triangle qr'r$ contains a `bad' angle $\angle qr'p$ or $\angle qr'r$ respectively
(cf. (\ref{eqn2.10})). Moreover, as in Case 1 (see the end of the proof for Case 1), up to repeating such a cutting operation finite times we can assume
that $|pr|\ll |pq|$ which implies that $\tilde\angle_kpqr\ll \frac\pi2$, and thus by (2.2.1) again we can conclude that
\begin{equation}\label{eqn2.380}
\text{$\angle qr'p$ and $\angle qr'r$ are `bad' angles in $\triangle qr'p$ and $\triangle qr'r$ respectively}.
\end{equation}
Since there is at most one point $t\in [pr]^\circ$ such that $\angle qtp=\frac\pi2$
(by Step 1), we can assume that in $[pr']^\circ$ or $[r'r]^\circ$, say $[r'r]^\circ$,
there is no point $t$ such that $\angle qtr'=\frac\pi2$.
Meantime, note that $\angle qtr'+\angle qtr=\pi$ for all $t\in[r'r]^\circ$ (by Lemma \ref{lem1.6}). Then by repeating the cutting operation on $\triangle qr'r$ two more times,
we can locate $[\hat p\hat r]\subset [r'r]\setminus\{r\}\subset [pr]^\circ$ such that $\angle q\hat p\hat r$ is acute and `bad' in $\triangle q\hat p\hat r$.

\vskip1mm

\noindent {\bf Step 3}.  To show that there is $\check r \in [\hat p \hat r]$ and $p_i\in [\hat p\check r]^\circ$ with $p_i\to \check r$ as $i\to\infty$ such that
$\angle qp_i\check r$ is acute and `bad' in $\triangle qp_i\check r$.

\vskip1mm

Let $\triangle \tilde q\tilde{\hat p}\tilde{\hat r}\subset\mathbb S_k^2$ be a comparison triangle of $\triangle q\hat p\hat r$.
Since $\angle q\hat p\hat r$ is `bad' (i.e. $\angle q\hat p\hat r>\angle \tilde q\tilde{\hat p}\tilde{\hat r}$),
by Lemma \ref{lem1.4} there is $t_1\in [\hat p\hat r]\setminus\{\hat p\}$ such that
$|qt_1|=|\tilde{q}\tilde t_1|$ and $|qt|>|\tilde{q}\tilde t|$ for all $t\in[\hat pt_1]^\circ$,
where $\tilde t_1,\tilde t\in [\tilde{\hat p}\tilde{\hat r}]$ with $|\tilde t_1\tilde{\hat p}|=|t_1\hat p|$
and $|\tilde t\tilde{\hat p}|=|t\hat p|$.
As a result, by (\ref{eqn2.25}), it is not hard to see that
\begin{equation}\label{eqn2.38}
 |qt|\neq|q[\hat pt_1]| \text{ for any } t\in [\hat pt_1]^\circ.
\end{equation}
And due to `$\angle q\hat p\hat r<\frac\pi2$' (see Step 2),
(\ref{eqn2.38}) has a more precise version
\begin{equation}\label{eqn2.39}
|qt|>|qt_1| \text{ for any } t\in [\hat pt_1]^\circ,
\end{equation}
which, by Lemma \ref{lem1.4}, implies that
\begin{equation}\label{eqn2.40}
\angle qt_1\hat p\geq \frac\pi2.
\end{equation}

Note that $\triangle \tilde{q}\tilde{\hat p}\tilde t_1$ is a comparison triangle of $\triangle q\hat pt_1$,
so $\angle q\hat p t_1$ is also `bad' in $\triangle q\hat p t_1$.
Together with `$\hat p\in [pr]^\circ$' (see Step 2) and by Lemma \ref{lem1.7},
this implies that $\angle qtt_1$ is `bad' in $\triangle qtt_1$ for $t\in [\hat p t_1]^\circ$ sufficiently close to $\hat p$.
Hence, one of the following two cases must happen:

$\bullet$ for all $t\in [\hat p t_1]^\circ$, $\angle qtt_1$ is `bad' in $\triangle qtt_1$;

$\bullet$ there is $t_\ast\in [\hat p t_1]^\circ$ such that $\angle qtt_1$ is `bad' in $\triangle qtt_1$
for all $t\in [\hat pt_\ast]$ except $t_*$.

In the former case, we can put $\check r=t_1$; otherwise, $\angle qtt_1\geq\frac\pi2$ for all $t\in [\hat p t_1]^\circ$ sufficiently close to $t_1$,
and thus $|qt|\leq |qt_1|$ by Lemma \ref{lem1.4}, which contradicts (\ref{eqn2.39}).

In the latter case, we shall put $\check r=t_*$ by showing that
there is  $p_i\in [\hat p t_*]^\circ$ with $p_i\to t_*$ as $i\to\infty$
such that $\angle qp_it_*$ is acute and `bad' in $\triangle qp_it_*$.

We first observe that, for any $t\in[\hat pt_*]\setminus\{t_*\}$,
at least one of $\angle qtt_*$ and $\angle qt_*t$ is a `bad' angle of $\triangle qtt_*$.
Otherwise, $\angle qtt_*\leq\tilde\angle_k qt t_*$
and $\angle qt_*t\leq\tilde\angle_kqt_*t$. Consider $\triangle \tilde q\tilde{t}\tilde t_*\subset\mathbb S^2_k$,
a comparison triangle of $\triangle qt t_*$, and let
$[\tilde{t}\tilde t_*]\subset [\tilde{t}\tilde z]\subset\mathbb S^2_k$
with $|\tilde{t}\tilde z|=|t t_1|$.
Since $\angle qt t_1>\tilde\angle_k qtt_1$ (by the badness of $\angle qt t_1$),
`$\angle qt t_*\leq\tilde\angle_k qt t_*$' implies that $|qt_1|<|\tilde q\tilde z|$.
Plus `$\angle qt_*t_1\leq\tilde\angle_k qt_*t_1$' (note that $\angle qt_*t_1$ is not `bad' in $\triangle qt_*t_1$),
we conclude that $\angle qt_*t_1<\angle \tilde q\tilde t_*\tilde z$,
and thus by Lemma \ref{lem1.6} we have that $\angle qt_*t>\angle \tilde q\tilde t_*\tilde{t}=\tilde\angle_k qt_*t$, a contradiction.

Based on the observation right above, for $t$ close to $t_*$,
we can conduct a cutting operation on $\triangle qtt_*$ as in Step 2 to locate
a $\hat t\in [tt_*]^\circ$ such that $\angle q\hat tt_*$ is `bad' in $\triangle q\hat tt_*$ (cf. (\ref{eqn2.380})).
Namely, we can locate  $p_i\in [\hat p t_*]^\circ$ with $p_i\to t_*$ as $i\to\infty$
such that $\angle qp_it_*$ is `bad' in $\triangle qp_it_*$.
On the other hand, we claim that $\angle qt_*t_1<\frac\pi2$,
which implies that $\angle qp_it_*<\frac\pi2$ (by Lemma \ref{lem1.7}).
In fact, the claim follows from that $\angle q t_*t_1\leq \tilde\angle_k qt_*t_1$
(note that $\angle q t_*t_1$ is not `bad' in $\triangle q t_*t_1$) and $\tilde\angle_k q t_*t_1<\frac\pi2$
(note that $|qt_*|>|qt_1|$ by (\ref{eqn2.39}) and $U_x$ is sufficiently small).

\vskip1mm

As shown in the beginning of the proof, for each $\triangle qp_i\check r$, there is
$\bar s_i\in [p_i\check r]\setminus\{p_i\}$ and $\bar t_i\in [p_iq]\setminus\{p_i\}$ such that
the corresponding (\ref{eqn2.35}) holds for all $s\in[p_i\bar s_i]^\circ$ and $t\in [p_i\bar t_i]$.

\vskip1mm

\noindent {\bf Step 4}.  To show that $\triangle p_i\bar t_i\bar s_i$ is our desired triangle for large $i$
(and thus the proof is done).

\vskip1mm

Note that if $|\bar s_i\bar t_i|\geq |\bar s_ip_i|$ or $|\bar s_i\bar t_i|\geq |\bar t_ip_i|$, say $|\bar s_i\bar t_i|\geq |\bar t_ip_i|$,
then by Lemma \ref{lem1.4} the acuteness of $\angle \bar t_ip_i\bar s_i\ (=\angle qp_i\check r)$
implies that $|\bar t_is|=|\bar t_i[p_i\bar s_i]|$ for some $s\in[p_i\bar s_i]^\circ$. Then,
by the inequalities in the corresponding (\ref{eqn2.35}) for $\bar s_i$ and $\bar t_i$, we can conclude that $\triangle p_i\bar t_i\bar s_i$ is our desired triangle.

Hence, in the rest of the proof, we only need to consider the case where
\begin{equation}\label{eqn2.41}
|\bar s_i\bar t_i|<|\bar s_ip_i| \text{ and } |\bar s_i\bar t_i|<|\bar t_ip_i|.
\end{equation}
In this case, it suffices to show that one of $\angle\bar s_i\bar t_ip_i$ and $\angle\bar t_i\bar s_ip_i$ is less than $\frac\pi2$ for large $i$,
which together with the acuteness of $\angle \bar t_ip_i\bar s_i$ also implies that $\triangle p_i\bar t_i\bar s_i$ is our desired triangle by Lemma \ref{lem1.4}.

The main tool here is (\ref{lem2.6}.2). In order to apply it, we need to verify its conditions in the situation here.
We first note that (\ref{eqn2.41}) with $|p_i\bar s_i|\leq|p_i\check r|\to 0$ as $i\to\infty$ implies that $|p_i\bar t_i|\to 0$ as $i\to\infty$,
and thus there is $r_i\in[p_iq]$ such that
\begin{equation}\label{eqn2.42}
|p_i\bar t_i|=\frac12|p_ir_i|.
\end{equation}
On the other hand, note that $[p_i\bar s_i]\subseteq[p_i\check r]\subset[\hat p\hat r]\subset[pr]^\circ$, so by Lemma \ref{lem1.7}
\begin{equation}\label{eqn2.43}
\angle\bar t_ip_i\bar s_i\ (=\angle qp_i\check r) \to \angle q\check rr \quad \text{as} \quad i\to\infty.
\end{equation}
Moreover, we can assume that $X$ is of curvature $\geq k_x$ around $x$,
so we have that $\lim\limits_{i\to\infty}(\angle\bar t_ip_i\bar s_i-\tilde\angle_{k_x}\bar t_ip_i\bar s_i)=0$ by (\ref{lem2.6}.1), which
together with (\ref{eqn2.41}) and (\ref{eqn2.43}) implies that
there is a $c>0$ such that
\begin{equation}\label{eqn2.44}
\min\{\tilde\angle_{k_x}\bar s_ip_i\bar t_i,\tilde\angle_{k_x}p_i\bar s_i\bar t_i,\tilde\angle_{k_x}p_i\bar t_i\bar s_i\}>c.
\end{equation}
Note that (\ref{eqn2.42}) and (\ref{eqn2.44}) enable us to apply (\ref{lem2.6}.2) on $\triangle \bar t_ip_i\bar s_i$ to
conclude that
$$|\angle\bar s_i\bar t_ip_i-\tilde\angle_{k_x}\bar s_i\bar t_ip_i|+|\angle\bar t_ip_i\bar s_i-\tilde\angle_{k_x}\bar t_ip_i\bar s_i|+
|\angle\bar t_i\bar s_ip_i-\tilde\angle_{k_x}\bar t_i\bar s_ip_i|\longrightarrow 0 \text{ as } i\longrightarrow\infty.
$$
Note that
$$
\tilde\angle_{k_x}\bar s_i\bar t_ip_i+\tilde\angle_{k_x}\bar t_ip_i\bar s_i+\tilde\angle_{k_x}\bar t_i\bar s_i p_i\to \pi  \text{ as } i\longrightarrow\infty.
$$
Then plus (\ref{eqn2.43}) we can conclude that at least one of $\angle\bar s_i\bar t_ip_i$ and $\angle\bar t_i\bar s_ip_i$ is less than $\frac\pi2$ for large $i$.
\end{proof}


\section{Proof of Theorem A for curvature $\equiv k$ on $X^\circ$}

In this section, we will show the rigidity part of Theorem A,
i.e., we will prove that $X^\circ$, the interior part of $X$, is a Riemannian manifold with sectional curvature equal to $k$ if  each $x\in X$ satisfies that
\begin{equation}\label{eqn3.1}
\text{for any $q\in U_x$ and $[r_1r_2]\subset U_x$, if there is $p\in [r_1r_2]^\circ$
	such that $|qp|=|q[r_1r_2]|$, then
	$\tilde\angle_kqpr_i=\frac\pi2$}.
\end{equation}

Note that, by the conclusion in Theorem A for curvature $\geq k$, (\ref{eqn3.1}) implies clearly that $X$ is of curvature $\geq k$ around
each $x\in X$ (i.e. $X$ is an Alexandrov space with curvature $\geq k$). Hence, at each $z\in X$, we can consider the space of directions and the tangent cone,
$\Sigma_zX$ and $C_zX$, which are still Alexandrov spaces of curvature $\geq 1$ and $\geq 0$ respectively. (Refer to Section 1 for $\Sigma_zX$ and $C_zX$.)
An easy observation from (\ref{eqn3.1}) is  that $\Sigma_zX$ and $C_zX$ also satisfy a corresponding property of (\ref{eqn3.1}), i.e. Lemma \ref{lem3.1} below.
This makes it possible to apply the inductive assumption on
$\Sigma_zX$ which is of dimension one less than $X$ and has an empty boundary if $z\in X^\circ$.

\begin{lemma}\label{lem3.1}
 Let $X$ be a complete Alexandrov space, and let $U_x$ be a neighborhood
of $x\in X$ satisfying {\rm (\ref{eqn3.1})}, and let $z\in U_x$. Then for all $\bar q\in C_zX$  (resp. $\in\Sigma_zX$)
and $[\bar r_1\bar r_2]\subset C_zX$ (resp. $\subset\Sigma_zX$),
\begin{equation}\label{eqn3.2}
\text{if there is $\bar p\in [\bar r_1\bar r_2]^\circ$
such that $|\bar q\bar p|=|\bar q[\bar r_1\bar r_2]|$, then
$\tilde\angle_{0\ (resp.\ 1)}\bar q\bar p\bar r_i=\frac\pi2$}.
\end{equation}
\end{lemma}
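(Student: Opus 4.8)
The plan is to deduce the cone statement (with $\tilde\angle_0$) from a rescaling limit, and then to deduce the space-of-directions statement (with $\tilde\angle_1$) from the cone statement via the metric-cone structure $C_zX=C(\Sigma_zX)$. First note that, as already observed, (\ref{eqn3.1}) forces $X$ to have curvature $\geq k$, so $C_zX$ and $\Sigma_zX$ are genuine Alexandrov spaces (of curvature $\geq 0$ and $\geq 1$ respectively), and Lemmas \ref{lem1.4}, \ref{lem1.6} and \ref{lem1.8} are available near $z$.

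For the cone $C_zX$ I would argue by blow-up. Fix $\bar q$, a geodesic $[\bar r_1\bar r_2]$ and an interior foot $\bar p$ in $C_zX$. By Lemma \ref{lem1.8}, $(\lambda X,z)\to C_zX$ in the Gromov--Hausdorff sense, so I can choose $q_\lambda,r_{1,\lambda},r_{2,\lambda}\in X$ whose rescaled distances converge to those of $\bar q,\bar r_1,\bar r_2$; passing to a subsequence, the minimal geodesics $[r_{1,\lambda}r_{2,\lambda}]$ converge to $[\bar r_1\bar r_2]$, and the exact feet $p_\lambda\in[r_{1,\lambda}r_{2,\lambda}]$ with $|q_\lambda p_\lambda|=|q_\lambda[r_{1,\lambda}r_{2,\lambda}]|$ converge, after rescaling, to an interior foot of $\bar q$ on $[\bar r_1\bar r_2]$. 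Since $\lambda X$ has curvature $\geq k/\lambda^2\to 0$ and the comparison angle is invariant under rescaling the space together with its comparison curvature, $\tilde\angle_k q_\lambda p_\lambda r_{i,\lambda}$ converges to $\tilde\angle_0$ of the limiting Euclidean triangle. Applying (\ref{eqn3.1}) in $X$ gives $\tilde\angle_k q_\lambda p_\lambda r_{i,\lambda}=\frac\pi2$ for all large $\lambda$, whence $\tilde\angle_0\bar q\bar p\bar r_i=\frac\pi2$. The point needing care is that the limiting exact foot coincides with the prescribed $\bar p$; I would secure this by localizing $[\bar r_1\bar r_2]$ to a small subsegment around $\bar p$ on which $\bar p$ is the unique foot, which preserves the interior-foot hypothesis.

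For the space of directions I would use $C_zX=C(\Sigma_zX)$ with cone distance $|(t,\xi)(s,\eta)|^2=t^2+s^2-2ts\cos|\xi\eta|$. Given $\bar q\in\Sigma_zX$, a geodesic $[\bar r_1\bar r_2]\subset\Sigma_zX$ of length $\ell<\pi$ with interior foot $\bar p$ and $\rho_0:=|\bar q\bar p|=|\bar q[\bar r_1\bar r_2]|\ (\leq\frac\pi2)$, the cone over $[\bar r_1\bar r_2]$ is a \emph{flat} planar sector $W$ of angle $\ell$, in which I place $\bar r_1,\bar p,\bar r_2$ at polar angles $0,\theta_0,\ell$ on the unit circle. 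Set $Q=(1,\bar q)$ and $P_0=(\cos\rho_0,\bar p)\in W$. Writing $\rho(\theta)$ for the $\Sigma_zX$-distance from $\bar q$ to the point of $[\bar r_1\bar r_2]$ at angle $\theta$ and using $\rho(\theta)\geq\rho_0$, a direct computation with the cone distance gives $|Q(t,\theta)|^2=1+t^2-2t\cos\rho(\theta)\geq 1-\cos^2\rho_0=\sin^2\rho_0=|QP_0|^2$, so $P_0$ is the global nearest point of $W$ to $Q$; hence $P_0$ is the foot of the perpendicular from $Q$ to \emph{every} minimal geodesic of $C_zX$ lying in $W$ through $P_0$. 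Choosing such a geodesic $[A_1A_2]$ with $A_i=(a_i,\bar r_i)$ on the two boundary rays (so $P_0\in[A_1A_2]^\circ$), the already-proved cone statement yields $\tilde\angle_0 QP_0A_i=\frac\pi2$. Expanding this with $|QP_0|^2=\sin^2\rho_0$, $|QA_i|^2=1+a_i^2-2a_i\cos|\bar q\bar r_i|$ and the planar value $|P_0A_i|^2=\cos^2\rho_0+a_i^2-2a_i\cos\rho_0\cos|\bar p\bar r_i|$, the $a_i^2$- and constant terms cancel and the $a_i$-terms force $\cos|\bar q\bar r_i|=\cos\rho_0\cos|\bar p\bar r_i|$, which is precisely the spherical Pythagorean relation $\tilde\angle_1\bar q\bar p\bar r_i=\frac\pi2$.

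The main obstacle, I expect, is the convergence bookkeeping in the cone step rather than the trigonometry in the link step: one must ensure that under the blow-up the minimal geodesics and, crucially, the \emph{exact} feet converge to the prescribed interior foot (handled by the localization above), and that the curvature-rescaled comparison angles $\tilde\angle_{k/\lambda^2}$ pass to $\tilde\angle_0$ in the limit. A secondary technical point is to justify, for the link step, that $\rho_0\leq\frac\pi2$ (so that $P_0$ has positive radius and is not the apex) and that the flat sector $W$ is convex in $C_zX$, so that $[A_1A_2]\subset W$ is genuinely a minimal geodesic of $C_zX$; both follow from the curvature bound $\geq 1$ on $\Sigma_zX$ together with $\ell<\pi$ and the interior-foot hypothesis.
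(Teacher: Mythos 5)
Your proposal is correct and takes essentially the same route as the paper: the paper's entire proof consists of the two remarks that (\ref{eqn3.2}) for $C_zX$ follows from the convergence $(\lambda X,z)\to C_zX$ of Lemma \ref{lem1.8}, and that the statement for $\Sigma_zX$ then follows from the one for $C_zX$ via the metric-cone structure. Your blow-up step (with the localization of the foot) and your flat-sector/law-of-cosines computation reducing $\tilde\angle_1\bar q\bar p\bar r_i=\frac\pi2$ to $\tilde\angle_0 QP_0A_i=\frac\pi2$ are precisely the details the paper dismisses as ``not hard to see,'' carried out correctly.
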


\begin{proof}	
By definition, $C_zX$ is the cone over $\Sigma_zX$ (\cite{BGP})
\footnote{The metric on $C_zX$ is defined from the Law of Cosine on $\mathbb R^2$  by viewing distances on $\Sigma_zX$
as angles (\cite{BGP}).}. So, it is not hard to see that the property of (\ref{eqn3.2}) for $C_zX$ implies that for $\Sigma_zX$.
In order to see (\ref{eqn3.2}) for $C_zX$, one just needs to notice that $C_zX$ is the limit space of
$(\lambda X, z)$ as $\lambda\to+\infty$ by Lemma \ref{lem1.8}.
\end{proof}

Note that (\ref{eqn3.1}) is contained in (\ref{eqn2.25}). Then, since $X$ is of curvature $\geq k$ around $x$,
by Lemma \ref{lem2.4} we have another easy observation:

\begin{lemma} \label{lem3.2}
Let $X$ be a cone over a circle with perimeter less than $2\pi$ (an Alexandrov space
of curvature $\geq 0$). Then around its vertex there is no neighborhood such that {\rm(\ref{eqn3.1})} holds with respect to $k=0$.
\end{lemma}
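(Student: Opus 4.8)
The plan is to argue by contradiction and reduce the whole statement to Lemma \ref{lem2.4}. Suppose, for contradiction, that some neighborhood $U_o$ of the vertex $o$ satisfied the condition (\ref{eqn3.1}) with $k=0$. Since the equality $\tilde\angle_0 qpr_i=\frac\pi2$ demanded in (\ref{eqn3.1}) in particular yields $\tilde\angle_0 qpr_i\geq\frac\pi2$, the condition (\ref{eqn2.25}) would also hold on $U_o$. Moreover, $X$ here is a cone over a circle of perimeter $<2\pi$, hence an Alexandrov space of curvature $\geq0$, so $o$ is a CBB-type point. Thus, after shrinking $U_o$ to a sufficiently small neighborhood if necessary (the condition (\ref{eqn2.25}) is inherited by any smaller neighborhood), Lemma \ref{lem2.4} would apply and guarantee a \emph{unique} minimal geodesic between any two distinct points of $U_o$.

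The contradiction I would then exhibit is an explicit pair of points, arbitrarily close to $o$, that are joined by two distinct minimal geodesics. I would unroll $X$ to its flat metric cone of total cone angle $\ell<2\pi$, so that $X$ with one radial ray removed is isometric to a planar sector of angle $\ell$ with apex $o$ (the two bounding rays being identified). For a small $\rho>0$ I pick two points $q$ and $r$ at radius $\rho$ from $o$ whose angular coordinates around the cone differ by exactly $\frac{\ell}{2}$. The two angular arcs joining the direction of $q$ to that of $r$ then have equal width $\frac{\ell}{2}$, and since $\frac{\ell}{2}<\pi$ each of them unrolls to a genuine straight chord of equal length $2\rho\sin\frac{\ell}{4}$; both are therefore minimal geodesics, and they are distinct (they go around $o$ on opposite sides). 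As $\rho$ may be taken as small as we please, such a pair lies in any neighborhood of $o$, contradicting the uniqueness deduced from Lemma \ref{lem2.4}.

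The only points that will require care are the two small verifications underlying this contradiction. First, one must check that both connecting chords are genuinely \emph{minimizing}, not merely locally geodesic; this is exactly where $\ell<2\pi$ (equivalently $\frac{\ell}{2}<\pi$) is used, since the competing route through the apex has length $2\rho>2\rho\sin\frac{\ell}{4}$, and the two equal-width arcs tie each other, leaving no shorter candidate. Second, one must note that the scale-invariance of the cone places this example inside the possibly very small neighborhood furnished by Lemma \ref{lem2.4}. I expect the minimality check to be the only real content of the argument, everything else being a direct appeal to the observation that (\ref{eqn3.1}) is contained in (\ref{eqn2.25}) together with Lemma \ref{lem2.4}.
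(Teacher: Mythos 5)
Your proposal is correct and follows essentially the same route as the paper: the paper also deduces the lemma directly from Lemma \ref{lem2.4} (after noting that (\ref{eqn3.1}) implies (\ref{eqn2.25})) and the fact that two points sufficiently close to the vertex are joined by two minimal geodesics. Your explicit construction of the pair at radius $\rho$ with angular separation $\frac{\ell}{2}$, together with the minimality check $2\rho\sin\frac{\ell}{4}<2\rho$, simply fills in the detail the paper leaves implicit.
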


\begin{proof}
The lemma is an easy corollary of Lemma \ref{lem2.4} because
we can find two points sufficiently close to the vertex of $X$ between which
there are two minimal geodesics.
\end{proof}

To $q, p, [r_1r_2]$ in (\ref{eqn3.1}), we associate $\tilde{q}\in\mathbb S_k^2$ and $\tilde{p}\in[\tilde{r}_1\tilde{r}_2]\subset\mathbb S_k^2$ with
$|\tilde r_1\tilde r_2|=|r_1r_2|$, $|\tilde q\tilde r_i|=|qr_i|$ and $|\tilde r_i\tilde p|=|r_ip|$.
Since $X$ is of curvature $\geq k$ on $U_x$, we know that $|qp|\geq |\tilde q\tilde p|$ (by Definition \ref{dfn1.2}) and
any $[qp]$ is perpendicular to $[r_1r_2]$ at $p$ (by Lemma \ref{lem1.6}). Then we have the third easy observation from (\ref{eqn3.1}):
\begin{equation}\label{eqn3.3}
\text{$|qp|=|\tilde q\tilde p|$ (and $[\tilde q\tilde p]$ has to be perpendicular to $[\tilde r_1\tilde r_2]$)}.
\end{equation}
Thereby, the following rigidity version of Theorem \ref{thm1.3} can be applied.

\begin{theorem}[\cite{GM}] \label{thm3.3}
	 Let $X$ be a complete Alexandrov space with curvature $\geq k$,
	 and let $q\in X$ and  $[pr]\subset X$. If there is $s\in [pr]^\circ$ such that $|qs|=|\tilde q\tilde s|$,
	 where $\tilde q\in\mathbb S^2_k$ and $\tilde s\in[\tilde p\tilde r]\subset \mathbb S^2_k$ with $|\tilde q\tilde p|=|qp|$, $|\tilde p\tilde r|=|pr|$, $|\tilde q\tilde r|=|qr|$ and $|ps|=|\tilde p\tilde s|$,
	 then there is $[pq]$ and $[qr]$ such that $[pr]$ together with them bounds a convex surface which can be embedded isometrically into $\mathbb S^2_k$.
\end{theorem}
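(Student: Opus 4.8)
The plan is to show that the comparison triangle $\triangle\tilde p\tilde q\tilde r\subset\mathbb S^2_k$ is realized isometrically inside $X$ as the ruled surface $\Delta\triangleq\bigcup_{s\in[pr]}[qs]$, by upgrading the hypothesis $|qs_0|=|\tilde q\tilde s_0|$ at a single interior point to a full two-dimensional rigidity. The engine throughout is a \emph{sandwich}: for curvature $\geq k$, condition (1.1) of Definition~\ref{dfn1.2} (equivalently the SSS form of Theorem~\ref{thm1.3}) always gives the inequality ``$\geq$'' for the distance from an external point to a point of a geodesic, while the \emph{hinge} form of Theorem~\ref{thm1.3} gives the reverse ``$\leq$'' \emph{provided} the relevant angle already equals its comparison angle. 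So the whole argument is a bootstrap that manufactures enough angle equalities to close the sandwich.

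First I would rigidify the base. Since $|qs_0|=|\tilde q\tilde s_0|$, the comparison triangle of $\triangle qs_0p$ is exactly the subconfiguration $\triangle\tilde q\tilde s_0\tilde p$, whence $\tilde\angle_k qs_0p=\angle\tilde q\tilde s_0\tilde p$ and likewise for $r$. As $\angle qs_0p+\angle qs_0r=\pi$ at the interior point $s_0$ (Lemma~\ref{lem1.6}) and the two comparison angles also sum to $\pi$, the Toponogov inequalities $\angle qs_0p\geq\tilde\angle_k qs_0p$, $\angle qs_0r\geq\tilde\angle_k qs_0r$ are forced to be equalities. Feeding these into the hinge form of Theorem~\ref{thm1.3} at $s_0$ yields $|qs|\leq|\tilde q\tilde s|$ for every $s\in[pr]$, and with the free inequality $|qs|\geq|\tilde q\tilde s|$ I obtain $|qs|=|\tilde q\tilde s|$ along the \emph{entire} base. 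Re-running the $\pi$-sum argument at each interior $s$ then gives $\angle qsp=\angle\tilde q\tilde s\tilde p$ for all $s\in(p,r)$.

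Next I would push the rigidity off the base. For $a\in[qs]$ and any $s'\in[pr]$, the hinge at $s$ (whose angle $\angle ass'=\angle qsr$ now equals its comparison angle) gives $|as'|\leq|\tilde a\tilde s'|$, while condition (1.1) applied with external point $s'$ and geodesic $[qs]$ gives $|as'|\geq|\tilde a\tilde s'|$; hence every radial point sits at the model distance from every base point. This in turn rigidifies the apex: applying the first variation formula (Lemma~\ref{lem1.4}) at $q$ to $a\in[qs]$ near $q$ and to external $s'$, matching the first-order terms of $|as'|=|\tilde a\tilde s'|$ forces $|\uparrow_q^s\Uparrow_q^{s'}|=\angle\tilde s\tilde q\tilde s'$, and, once the minimal geodesics $[qs']$ are known to be unique in the region, $\angle sqs'=\angle\tilde s\tilde q\tilde s'$. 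With apex angle equality in hand, the same sandwich—hinge at $q$ for ``$\leq$'', condition (1.1) for ``$\geq$''—gives $|aa'|=|\tilde a\tilde a'|$ for all $a\in[qs]$, $a'\in[qs']$. Thus the radial correspondence $\Phi\colon\tilde\Delta\to\Delta$, sending the point of $[\tilde q\tilde s]$ at distance $\rho$ from $\tilde q$ to the point of $[qs]$ at distance $\rho$ from $q$, is distance-preserving; its inverse embeds $\Delta$ isometrically into $\mathbb S^2_k$ as the convex region bounded by $\triangle\tilde p\tilde q\tilde r$, and the radial geodesics $[pq],[qr]$ together with $[pr]$ bound the desired convex surface.

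The main obstacle lies precisely at the two places where the sandwich needs an angle equality not yet available for free, i.e.\ in converting base rigidity into apex rigidity. This requires (i) knowing the minimal geodesics $[qs']$ are unique and non-branching throughout the swept region, so that $|\uparrow_q^s\Uparrow_q^{s'}|$ genuinely equals $\angle sqs'$, and (ii) verifying that $\{[qs]\}$ sweeps an embedded $2$-disk whose induced length metric agrees with the restriction of the metric of $X$, so that the literal conclusion ``isometric embedding of a convex surface'' holds. Both are soft when $X$ is a manifold but genuinely invoke the Alexandrov structure in general—the geometry of $\Sigma_qX$ and the blow-up Lemma~\ref{lem1.8}—whereas the distance identities themselves reduce to routine applications of the comparison inequalities above.
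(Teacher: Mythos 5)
First, a point of comparison: the paper does not prove Theorem~\ref{thm3.3} at all --- it is imported from \cite{GM} as a black box --- so your attempt can only be measured against the standard rigidity argument, which your first two stages reproduce correctly. Since $|qs_0|=|\tilde q\tilde s_0|$, the triangles $\triangle\tilde q\tilde s_0\tilde p$ and $\triangle\tilde q\tilde s_0\tilde r$ are genuine comparison triangles; Lemma~\ref{lem1.6} together with the $\pi$-sum of the model angles forces $\angle qs_0p=\angle\tilde q\tilde s_0\tilde p$ and $\angle qs_0r=\angle\tilde q\tilde s_0\tilde r$ for \emph{every} choice of $[qs_0]$; and your sandwich between the globalized hinge comparison (Toponogov, valid globally for curvature $\geq k$ by the remark after Lemma~\ref{lem1.4}) and condition (1.1) of Definition~\ref{dfn1.2} correctly propagates $|qs|=|\tilde q\tilde s|$ along the whole base and then $|as'|=|\tilde a\tilde s'|$ for all $a\in[qs]$, $s'\in[pr]$.

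The genuine gap is the apex step, and you half-diagnose it yourself. The first variation formula at $q$ only yields $|\uparrow_q^{s}\Uparrow_q^{s'}|=\angle\tilde s\tilde q\tilde s'$, i.e.\ angle equality for the \emph{best} direction in the possibly large set $\Uparrow_q^{s'}$; to run the hinge at $q$ you need one coherent family $\{[qs']\}_{s'\in[pr]}$ with $\angle sqs'=\angle\tilde s\tilde q\tilde s'$, and your fallback hypothesis --- uniqueness of $[qs']$ --- is simply not available in a CBB space and is not implied by the hypotheses; indeed the theorem's own phrasing (``there is $[pq]$ and $[qr]$'') anticipates non-uniqueness. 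As written, nothing prevents a chosen $[qs']$ from making a strictly larger angle with $[qs]$ than the model, which destroys the ``$\leq$'' half of your sandwich. The repair is the very trick you used on the base but only there: work from \emph{interior} points instead of from $q$. For $a\in[qs_0]^\circ$ all of $|aq|$, $|as'|$, $|qs'|$, $|as_0|$, $|s_0s'|$ are already model distances by your steps 1--2, so $\tilde\angle_k qas'$ and $\tilde\angle_k s_0as'$ are the model angles, which sum to $\pi$; Lemma~\ref{lem1.6} then forces angle equality at $a$ for \emph{every} geodesic $[as']$, with no selection problem, and letting $a\to q$ along $[qs_0]$ and passing to limit geodesics $[as']\to[qs']$ produces the coherent family (non-branching of geodesics in curvature $\geq k$ gives consistency). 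Alternatively, \cite{GM} builds the distance-preserving map on a dense dyadic subset of the model region by iterated midpoints and extends by completeness; either route also disposes of your obstacle (ii), since once \emph{all} mutual distances in $\bigcup_{s'}[qs']$ are model values, the natural map is an isometry onto this set with its restricted metric, and convexity follows because model geodesics are carried to minimal geodesics of $X$ lying in the image.
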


Based on the three observations above, we can prove the rigidity part of Theorem A.

\begin{proof}[Proof for the rigidity part of Theorem A]\

Note that our assumption is that (\ref{eqn3.1}) holds for each $x\in X$.
So, as mentioned above, $X$ is an Alexandrov space with curvature $\geq k$ and $\dim(X)$, the dimension of $X$, can be defined.
We will give a proof by induction on $\dim(X)$.

We first consider the case where $\dim(X)=2$.
It suffices to show that, for any $p\in X^\circ$, there is a convex surface
$\mathcal{S}\ (\subseteq X)$ which can be embedded isometrically into $\mathbb S^2_k$ such that $p\in \mathcal{S}^\circ$.

Since $p\in X^\circ$,
$\Sigma_pX$ is a circle with perimeter $\leq 2\pi$ (\cite{BGP}). Since $C_pX$ is the cone over $\Sigma_pX$,
by Lemma \ref{lem3.2}, the perimeter of $\Sigma_pX$ must be equal to $2\pi$,
and thus  $C_pX$ is a plane.  Let $\xi_i\in \Sigma_pX$,
$i=1,2,3$, be $\frac{2\pi}3$-separated (i.e. $|\xi_i\xi_j|=\frac{2\pi}3$ for $i\neq j$).
We know that there are $\{q_{il}\}_{l=1}^{\infty}\subset X$ with $|q_{1l}p|=|q_{2l}p|=|q_{3l}p|\to 0$ and the directions
$\uparrow_{p}^{q_{il}}\to \xi_i$ as $l\to\infty$ (\cite{BGP}).
On the other hand, we know that $C_pX$ is the limit space of
$(\frac{1}{|pq_{il}|}X, p)$ as $l\to\infty$ by Lemma \ref{lem1.8}. It follows that $q_{il}\to q_i$
as  $(\frac{1}{|pq_{il}|}X, p)\to C_pX$ with $|pq_i|=1$ and $\uparrow_p^{q_i}=\xi_i$ in $C_pX$.
Note that as $(\frac{1}{|pq_{il}|}X, p)\to C_pX$ any triangle $\triangle q_{1l}q_{2l}q_{3l}\to \triangle q_1q_2q_3$,
an equilateral triangle.  It then is easy to see that, for sufficiently large $l$,
$|q_{3l}[q_{1l}q_{2l}]|=|q_{3l}q|$ for some $q\in [q_{1l}q_{2l}]^\circ$.
Then by (\ref{eqn3.3}) and Theorem \ref{thm3.3}, $\triangle q_{1l}q_{2l}q_{3l}$ bounds
a convex surface $\mathcal{S}_l$ which can be embedded isometrically into $\mathbb S^2_k$
(note that there is a unique minimal geodesic between any two points around $p$ by Lemma \ref{lem2.4}).

We claim that $\mathcal{S}_l$ for large $l$ is just our desired surface.
In fact, $\mathcal{S}_l$ converges to the domain bounded by $\triangle q_1q_2q_3$ in the plane $C_pX$
as  $(\frac{1}{|pq_{il}|}X, p)\to C_pX$. It follows that, for large $l$, there is $\bar p\in\mathcal{S}_l^\circ$ such that $|p\bar p|=|p\mathcal{S}_l|$.
Since $\mathcal{S}_l$ is a convex surface in $X$ and $\dim(X)=2$, it must hold that $p=\bar p$, i.e. $p\in\mathcal{S}_l^\circ$.

\vskip1mm

We now assume that $\dim(X)=n\geq 3$. Similarly, for any $p\in X^\circ$,
it suffices to show that $p$ lies in the interior part of a convex domain (in $X$)
which can be embedded isometrically into $\mathbb S^n_k$.

We first show that $C_pX$, the cone over $\Sigma_pX$, is isometric to the Euclidean space $\mathbb R^n$,
i.e.  $\Sigma_pX$ is isometric to the unit sphere $\mathbb S_{1}^{n-1}$.
Note that $\Sigma_pX$ is a complete Alexandrov space with curvature $\geq 1$,
and $(\Sigma_pX)^\circ=\Sigma_pX$ with dimension equal to $n-1$ because $p\in X^\circ$.
By Lemma \ref{lem3.1}, we can apply the inductive assumption on $\Sigma_pX$ to conclude that
it is a complete Rimannian manifold with sectional curvature equal to 1.
Then we need only to show that $\Sigma_pX$ is simply connected.
If it is not true, then by classical results in Riemannian geometry
there is a closed geodesic, a circle $S^1\subset\Sigma_pX$, with perimeter less than $2\pi$.
Note that the cone over the $S^1$ is convex in $C_pX$, which is impossible by Lemma \ref{lem3.2}.

Similarly, since $\Sigma_pX$ is isometric to $\mathbb S_1^{n-1}$, we can select an $(\arccos\frac{-1}{n})$-separated subset $\{\xi_i\in \Sigma_pX| i=1,2,\cdots, n+1\}$,
and $\{q_{il}\}_{l=1}^\infty\subset X$ with $|q_{1l}p|=|q_{2l}p|=\cdots=|q_{(n+1)l}p|\to 0$ and the directions
$\uparrow_{p}^{q_{il}}\to \xi_i$ as $l\to\infty$.
And it follows that $q_{il}\to q_i$
as  $(\frac{1}{|pq_{il}|}X, p)\to C_pX$ with $|pq_i|=1$ and $\uparrow_p^{q_i}=\xi_i$.

Claim: {\it For sufficiently large $l$, there is a convex simplex $\Delta_l$ with vertices $q_{1l},\cdots, q_{(n+1)l}$
which can be embedded isometrically into $\mathbb S^n_k$}.
By the claim, we need only to show that $p\in\Delta_l^\circ$ for large $l$.
Note that as $(\frac{1}{|pq_{il}|}X, p)\to C_pX$, $\Delta_l$
converge to the simplex with vertices $q_1,\cdots, q_{n+1}$ in $C_pX\stackrel{\rm iso}{\cong}\mathbb R^n$.
It follows that, for large $l$, there is $\bar p\in\Delta_l^\circ$  such that $|p\bar p|=|p\Delta_l|$.
Since $\Delta_l$ is a convex in $X$ and $\dim(\Delta_l)=\dim(X)$, it must hold that $p=\bar p$, i.e. $p\in\Delta_l^\circ$.

To complete the proof, it suffices to verify the claim right above.
Note that there is a unique minimal geodesic between any two points around $p$ by (\ref{eqn3.1}) and Lemma \ref{lem2.4}.
And as in the proof for $\dim(X)=2$, for large $l$, $|q_{3l}[q_{1l}q_{2l}]|=|q_{3l}q|$ for some $q\in [q_{1l}q_{2l}]^\circ$.
So, the triangle $\triangle q_{1l}q_{2l}q_{3l}$ bounds
a convex surface $\mathcal{S}_l$ which can be embedded isometrically into $\mathbb S^2_k$ (by (\ref{eqn3.3}) and Theorem \ref{thm3.3}).
Furthermore, there is $\bar{q}_{4l}\in\mathcal{S}_l^\circ$
(around the center of $\mathcal{S}_l$) such that $|q_{4l}\bar q_{4l}|=|q_{4l}\mathcal{S}_l|$.
Let $\tilde q_{il}\in\mathbb S^3_k$, $i=1,2,3,4$, be the vertices of a simplex of dimension 3 with
$|\tilde q_{il}\tilde q_{jl}|=|q_{il}q_{jl}|$.
By applying (\ref{eqn3.3}) and Theorem \ref{thm3.3} iteratively, it is not hard to see that $\bigcup_{s\in\mathcal S_l}[q_{4l}s]$ is convex and isometric to the simplex
in $\mathbb S^3_k$ with vertices  $\tilde q_{1l}, \tilde q_{2l}, \tilde q_{3l}, \tilde q_{4l}$.
And step by step, we can eventually get the desired $\Delta_l$ in the claim.
\end{proof}


\section{Proofs of Theorem C and Corollary D}

\begin{proof}[Proof of Theorem C]\

Let $x$ be an interior point in a complete Alexandrov space $X$ with curvature $\geq k$.
As mentioned in Section 3, we can consider the space of directions and the tangent cone at $x$, $\Sigma_xX$ and $C_xX$.
And by Lemma \ref{lem1.8}, we know that $C_xX$ is the limit space of
$(\lambda X, x)$ as $\lambda\to+\infty$. It is not hard to see that if we substitute the condition
``$\left|\frac{|qr|^2}{|pq|^2+|pr|^2}-1\right|< \chi(\varepsilon)$ for all $[pq],[pr]\subset B_x(\varepsilon)$ with $\angle qpr=\frac\pi2$
and $\chi(\varepsilon)\to 0$ as $\epsilon\to0$'' of Theorem C for the condition of Lemma \ref{lem3.1}, the conclusion of Lemma \ref{lem3.1} for $\Sigma_xX$ and $C_xX$ still holds.
Then from the proof for the rigidity part of Theorem A (in Section 3), we can conclude that
$\Sigma_xX$ is isometric to the unit sphere, i.e. $x$ is Riemannian point.
\end{proof}

\begin{proof}[Proof of Corollary D]\

Let $x$  be an interior point in a complete CBB-type Alexandrov space $X$.
If  $x$ is also a CBA-type point, then by Theorems \ref{thm1.3} and 1.3$'$ it is easy to see that
there is a function $\chi(\varepsilon)$ with  $\chi(\varepsilon)\to 0$ as $\varepsilon\to0$
such that for all $[pq],[pr]\subset B_x(\varepsilon)$ with $\angle qpr=\frac\pi2$
$$\left|\frac{|qr|^2}{|pq|^2+|pr|^2}-1\right|< \chi(\varepsilon).$$
So, by Theorem C, $x$ is Riemannian point.
\end{proof}

\noindent School of Mathematical Sciences (and Lab. math. Com.
Sys.), Beijing Normal University, Beijing, 100875
P.R.C.\\
e-mail: suxiaole$@$bnu.edu.cn; wyusheng$@$bnu.edu.cn

\vskip2mm

\noindent Mathematics Department, Capital Normal University,
Beijing, 100037 P.R.C.\\
e-mail: 5598@cnu.edu.cn


\begin{thebibliography}{MMM}

\bibitem[AKP]{AKP} S. Alexander, V. Kapovitch and A. Petrunin, Alexandrov Geometry, (2010).

\bibitem[BGP]{BGP}
Yu. Burago, M. Gromov and G. Perel'man, A.D. Alexandrov spaces with
curvature bounded below, Uspeckhi Mat. Nank 47:2 (1992): 3-51.

\bibitem[CE]{CE} J. Cheeger and D. Ebin, Comparison theorems in Riemannian geometry, New York, 1975.

\bibitem[GM]{GM} K. Grove and S. Markvorsen, New Extremal Problems for the Riemannian Recognition Program via Alexandrov Geometry, J. of AMS,
8:1 (1995): 1-28.

\bibitem[LR]{LR}
N. Li and X. Rong, Bounding geometry of loops in Alexandrov spaces,
J. Diff. Geom., 92:1 (2012): 31-54.

\bibitem[Pl]{Pl} C. Plaut, Spaces of Wald-Berestovskii curvature bounded below, J. Geom. Anal. 6 (1996): 113-134.

\bibitem[Wa]{Wa} Y. S. Wang, A Schur-Toponogov theorem in Riemannian geometry
\& a new proof of Toponogov's theorem in Alexandrov geometry, arXiv:1809.09818.

\end{thebibliography}
\end{document}